\newtheorem{theorem}{Theorem}
\newtheorem{lemma}[theorem]{Lemma}
\newtheorem{proposition}[theorem]{Proposition}
\theoremstyle{definition}
\newtheorem{definition}[theorem]{Definition}
\title{Matching component analysis for transfer learning}
\author{Charles~Clum\footnote{Department of Mathematics, The Ohio State University, Columbus, OH~~(\texttt{clum.47@osu.edu})}\qquad Dustin~G.~Mixon\footnote{Department of Mathematics, The Ohio State University, Columbus, OH~~(\texttt{mixon.23@osu.edu})}\qquad Theresa~Scarnati\footnote{Air Force Research Laboratory, Wright-Patterson AFB, OH~~(\texttt{theresa.scarnati.1@us.af.mil})}}
\date{}
\begin{document}
\maketitle

\begin{abstract}
We introduce a new Procrustes-type method called matching component analysis to isolate components in data for transfer learning.
Our theoretical results describe the sample complexity of this method, and we demonstrate through numerical experiments that our approach is indeed well suited for transfer learning.
\end{abstract}

\section{Introduction}

Many state-of-the-art classification algorithms require a large training set that is statistically similar to the test set.
For example, deep learning--based approaches require a large number of representative samples in order to find near-optimal network weights and biases~\cite{higham2018deep, krizhevsky2012imagenet}.
Similarly, template-based approaches require large dictionaries of training images so that each test image can be represented by an element of the dictionary~\cite{tosic2011dictionary, yang2014sparse, paulson2018synthetic, irving1999classification}.
For each technique, if test images cannot be represented in a feature space that has been determined from the training set, then classification accuracy is poor. 

In applications such as synthetic aperture radar (SAR) automatic target recognition (ATR), it is infeasible to collect the volume of data necessary to naively train high-accuracy classification networks.
Additionally, due to varying operating conditions, the features measured in SAR imagery are different from those extracted from electro-optical (EO) imagery~\cite{LewisEtal:19}.
As such, off-the-shelf networks that have been pre-trained on the popular EO-based ImageNet~\cite{deng2009imagenet} or CIFAR-10~\cite{krizhevsky2010convolutional} datasets are insufficient for performing accurate ATR tasks in different imaging domains.
In fact, recent work has demonstrated that pre-trained networks fail to effectively generalize to random perturbations on test sets~\cite{recht2019imagenet, recht2018cifar}.
To build more representative training sets, additional data are often generated using modeling and simulation software.
However, due to various model errors, simulated data often misrepresent the real-world scattering observed in measured imagery.
Thus, even though it is possible to augment training sets with a large amount of simulated data, the inherent differences in sensor modalities and data representations make modifying classification networks a non-trivial task~\cite{scarnati2019deep}. 

In this paper, we introduce \textbf{matching component analysis (MCA)} to help remedy this situation.
Given a small number of images from the training domain and matching images from the testing domain, MCA identifies a low-dimensional feature space that both domains have in common.
With the help of MCA, one can map augmented training sets into a common domain, thereby making the classification task more robust to mismatch between the training and testing domains.
We note that other transfer learning methods, image-to-image domain regression techniques, and generative adversarial networks have all been developed with a similar task in mind~\cite{lewis2018generative, isola2017image, tzeng2017adversarial, motiian2017few, lee2018diverse}, but little theory has been developed to explain the performance of these machine learning--based adaptation techniques.
By contrast, in this paper, we estimate the number of matched samples needed for MCA to identify a common domain.

The rest of the paper is organized as follows.
Section~2 introduces the MCA algorithm and our main theoretical results.
In Section~3, we use a sequence of numerical experiments involving MNIST~\cite{LeCunCB:online} and SAR~\cite{LewisEtal:19} data to demonstrate that classifying data in the common domain allows for more accurate classification.
We discuss limitations of MCA in Section~4.
Sections~5 and~6 contain the proofs of our main theoretical results. 

\section{Matching component analysis}

Let $\mathbb{R}^{d_1}$ and $\mathbb{R}^{d_2}$ denote the training and testing domains, respectively.
Traditionally, our training set would consist of labeled points in $\mathbb{R}^{d_1}$, whereas our test test would consist of labeled points in $\mathbb{R}^{d_2}$.
In order to bridge the disparity between the training and testing domains, we will augment our training set with a \textbf{matching set} of $n$ labeled pairs in $\mathbb{R}^{d_1}\times\mathbb{R}^{d_2}$.
Then our full training set, whose size we denote by $N\gg n$, consists of a \textbf{conventional training set} of $N-n$ labeled points in $\mathbb{R}^{d_1}$ and a matching set of $n$ labeled points in $\mathbb{R}^{d_1}\times \mathbb{R}^{d_2}$.
The matching set will enable us to identify maps $g_1$ and $g_2$ from the training and testing domains to a common domain $\mathbb{R}^k$, where we can train a classifier $h$ on the full training set:
\begin{center}
\begin{picture}(400,120)
\thicklines
\put(7,100){\framebox{training domain}}
\put(10,15){\framebox{testing domain}}
\put(103,90){\vector(2,-1){38}}
\put(120,90){$g_1$}
\put(100,30){\vector(2,1){40}}
\put(120,30){$g_2$}
\put(150,56){\framebox{common domain}}
\put(250,60){\vector(1,0){50}}
\put(270,65){$h$}
\put(310,56){\framebox{label domain}}
\end{picture}
\end{center}
We model our setting in terms of unknown random variables $X_1\in\mathbb{R}^{d_1}$, $X_2\in\mathbb{R}^{d_2}$, $Y\in\mathbb{R}$ over a common probability space $(\Omega,\mathcal{F},\mathbb{P})$.
In particular, suppose points $\{\omega_j\}_{j\in[N]}$ are drawn independently at random from $(\Omega,\mathcal{F},\mathbb{P})$, and we are given
\[
\{X_1(\omega_j)\}_{j\in[N]},
\qquad
\{X_2(\omega_j)\}_{j\in[n]},
\qquad
\{Y(\omega_j)\}_{j\in[N]}
\]
for some $n\ll N$ with the task of finding $f\colon\mathbb{R}^{d_2}\to \mathbb{R}$ such that $f(X_2)\approx Y$.
Our approach is summarized by the following:
\begin{itemize}
\item[(i)]
Select $k\in\mathbb{N}$ and a class $\mathscr{F}_i$ of functions that map $\mathbb{R}^{d_i}$ to $\mathbb{R}^k$ for each $i\in\{1,2\}$.
\item[(ii)]
Use $\{X_1(\omega_j)\}_{j\in[n]}$ and $\{X_2(\omega_j)\}_{j\in[n]}$ to (approximately) solve
\begin{align}
\label{eq.general program}&\text{minimize}
\quad
\mathbb{E}\|g_1(X_1)-g_2(X_2)\|^2\\
\nonumber&\text{subject to}
\quad
g_i\in\mathscr{F}_i,
\quad
\mathbb{E}g_i(X_i)=0,
\quad
\mathbb{E}g_i(X_i)g_i(X_i)^\top=I_k,
\quad
i\in\{1,2\}.
\end{align}
\item[(iii)]
Train $h\colon\mathbb{R}^k\to \mathbb{R}$ on $\{g_1(X_1(\omega_j))\}_{j\in[N]}$ and $\{Y(\omega_j)\}_{j\in[N]}$, and return $f:=h\circ g_2$.
\end{itemize}
For (i), we are principally interested in the case where $\mathscr{F}_i$ is the set of affine linear transformations from $\mathbb{R}^{d_i}$ to $\mathbb{R}^k$.
This choice of function class is nice because it locally approximates arbitrary differentiable functions while being amenable to theoretical analysis.
Considering the ubiquity of principal component analysis in modern data science, this choice promises to be useful in practice.
The constraints in program~\eqref{eq.general program} ensure that the training set in (iii) is normalized, while simultaneously preventing useless choices for $g_i$, such as those for which $g_i(X_i)=0$ almost surely.
Intuitively, (ii) selects $g_1$ and $g_2$ so as to transform $X_1$ and $X_2$ into a common domain, and then (iii) leverages the large number of realizations of $X_1$ to predict $Y$ in this domain, thereby enabling us to predict $Y$ from $X_2$.
We expect this approach to work well in settings for which
\begin{itemize}
\item
each $g_i(X_i)$ captures sufficient information about $\omega$ to predict $Y$,
\item
$h$ is robust to slight perturbations so that $h(g_1(X_1))\approx h(g_2(X_2))$,
\item
$Y|X_2$ is too complicated to be learned from a training set of size $n$, and
\item
$Y|g_1(X_1)$ can be learned from a training set of size $N$.
\end{itemize}

To solve program~\eqref{eq.general program} in the case of affine linear transformations, $g_i$ must have the form $g_i(x)=A_ix+b_i$ for some $A_i\in\mathbb{R}^{k\times d_i}$ and $b_i\in\mathbb{R}^{k}$.
Let $\mu_i$ and $\Sigma_i$ denote the mean and covariance of $X_i$.
The constraint in program~\eqref{eq.general program} forces $A_i\mu_i+b_i=\mathbb{E}g_i(X_i)=0$, and so $b_i=-A_i\mu_i$, i.e., $g_i(x)=A_i(x-\mu_i)$.
The constraint also forces $A_i\Sigma_iA_i^\top=\mathbb{E}g_i(X_i)g_i(X_i)^\top=I_k$.
Overall, program~\eqref{eq.general program} is equivalent to
\begin{equation}
\label{eq.mca program 1}
\text{minimize}
\quad
\mathbb{E}\|A_1(X_1-\mu_1)-A_2(X_2-\mu_2)\|^2
\quad
\text{subject to}
\quad
A_i\Sigma_iA_i^\top=I_k,
\quad
i\in\{1,2\}.
\end{equation}
Notice that this program is not infeasible when $k>\min\{\operatorname{rank}\Sigma_1,\operatorname{rank}\Sigma_2\}$.
Of course, we do not have access to $X_1$ and $X_2$, but rather $n$ realizations of each, and so we are forced to approximate.
To this end, we estimate the means and covariances as
\begin{equation}
\label{eq.mean cov est}
\hat\mu_i
:=\frac{1}{n}\sum_{j\in[n]}X_i(\omega_j),
\qquad
\hat\Sigma_i
:=\frac{1}{n}\sum_{j\in[n]}(X_i(\omega_j)-\hat\mu_i)(X_i(\omega_j)-\hat\mu_i)^\top,
\end{equation}
and then consider the following approximation to program~\eqref{eq.mca program 1}:
\begin{align}
\label{eq.mca program 2}&\text{minimize}
\quad
\frac{1}{n}\sum_{j\in[n]}\|A_1(X_1(\omega_j)-\hat\mu_1)-A_2(X_2(\omega_j)-\hat\mu_2)\|^2\\
\nonumber&\text{subject to}
\quad
A_i\hat\Sigma_iA_i^\top=I_k,
\quad
i\in\{1,2\}.
\end{align}
Observe that program~\eqref{eq.mca program 2} is equivalent to
\begin{align}
\label{eq.mca program 3}&\text{minimize}
\quad
\frac{1}{n}\sum_{j\in[n]}\|A_1(X_1(\omega_j)-\hat\mu_1)-A_2(X_2(\omega_j)-\hat\mu_2)\|^2\\
\nonumber&\text{subject to}
\quad
A_i\hat\Sigma_iA_i^\top=I_k,
\quad
\operatorname{im}A_i^\top\subseteq\operatorname{im}\hat\Sigma_i,
\quad
i\in\{1,2\}.
\end{align}
Indeed, if $(A_1,A_2)$ is feasible in~\eqref{eq.mca program 2}, then we can project the rows of $A_i$ onto $\operatorname{im}\hat\Sigma_i$ without changing the objective value.
Next, define $r_i:=\operatorname{rank}\hat\Sigma_i$, take $V_i$ to be any $d_i\times r_i$ matrix whose columns form an orthonormal basis for $\operatorname{im}\hat\Sigma_i$, and define $Z_i$ to be the $r_i\times n$ matrix whose $j$th column is $V_i^\top(\hat\Sigma_i^\dagger)^{1/2}(X_i(\omega_j)-\hat\mu_i)$.
Then every solution of
\begin{equation}
\label{eq.mca program 4}
\text{minimize}
\quad
\frac{1}{n}\|B_1Z_1-B_2Z_2\|_F^2
\quad
\text{subject to}
\quad
B_iB_i^\top=I_k,
\quad
i\in\{1,2\}
\end{equation}
can be transformed to a solution to program~\eqref{eq.mca program 3} by the change of variables $A_i=B_iV_i^\top(\hat\Sigma_i^\dagger)^{1/2}$, where $B_i\in\mathbb{R}^{k\times r_i}$.
In fact, by this change of variables, programs~\eqref{eq.mca program 3} and~\eqref{eq.mca program 4} are equivalent.
In the special case where $k=d_1=d_2$, one may take $B_2=I_k$ without loss of generality, and then program~\eqref{eq.mca program 4} amounts to the well-known \textit{orthogonal Procrustes problem}~\cite{HornJ:12}.
In general, we refer to~\eqref{eq.mca program 4} as the \textbf{projection Procrustes problem}; see Figure~\ref{fig.mickey} for an illustration.
Considering orthogonal Procrustes enjoys a spectral solution, there is little surprise that projection Procrustes also enjoys a spectral solution:

\begin{figure}[t]
\centering
\includegraphics[width=\textwidth]{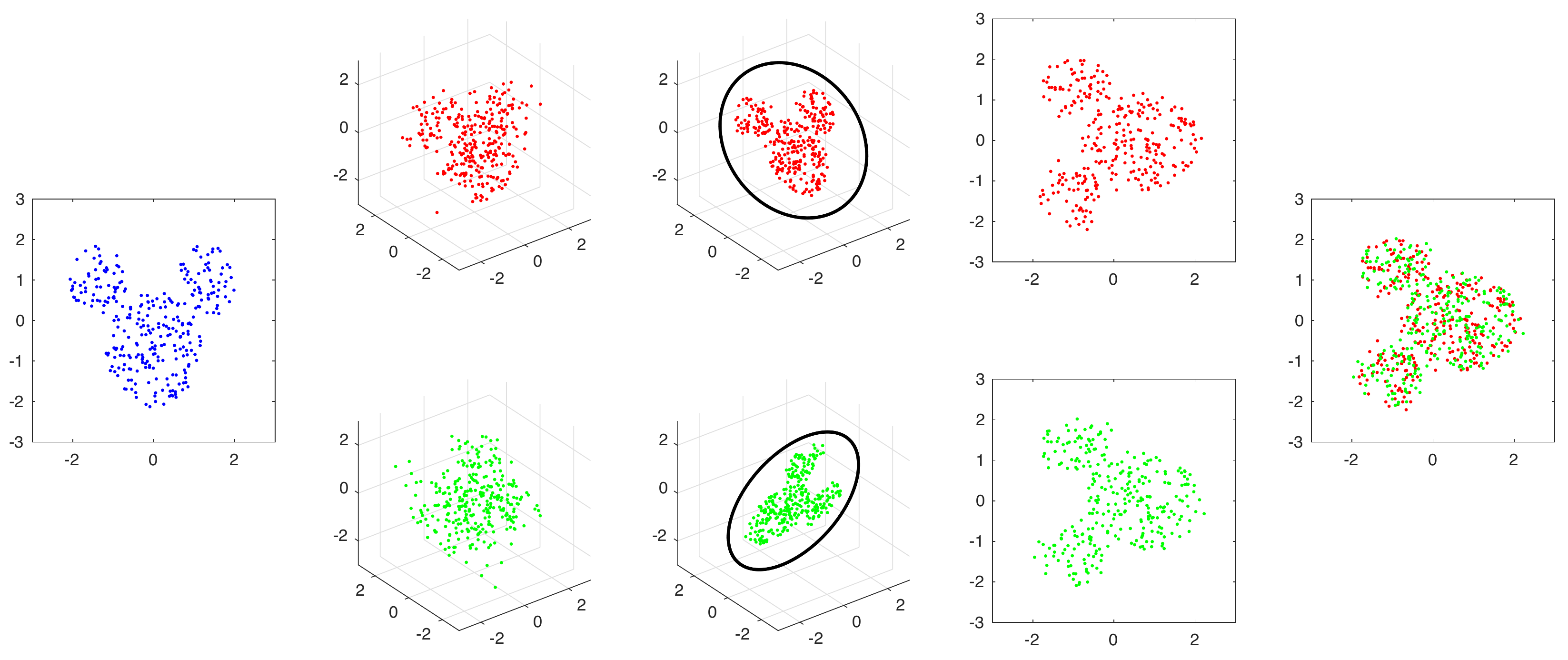}
\caption{\footnotesize{Illustration of the projection Procrustes problem. \textbf{(left)} Draw $300$ points from a uniform distribution over a Mickey Mouse shape in the $xy$-plane of $\mathbb{R}^3$. \textbf{(middle left)} Perform the following deformation twice in order to produce matched datasets $Z_1$ and $Z_2$: Add independent spherical Gaussian noise ($\sigma=0.1$) to each data point, randomly rotate the entire dataset, and then normalize the result to have zero mean and identity covariance. \textbf{(middle)} Solve the projection Procrustes problem for $Z_1$ and $Z_2$ with $k=2$. The optimal $B_1$ and $B_2$ have the property that $B_i^\top B_i$ is a $3\times 3$ orthogonal projection matrix of rank~$2$, and we plot the projected data $B_i^\top B_i Z_i$. \textbf{(middle right)} The resulting $2$-dimensional transformation of the data, namely, the columns of $B_iZ_i$. \textbf{(right)} We superimpose both datasets in the $2$-dimensional transform space to illustrate how well they are aligned.}\label{fig.mickey}}
\end{figure}

\begin{lemma}
\label{lem.proj proc prob}
Suppose $Z_iZ_i^\top=nI_{r_i}$ for both $i\in\{1,2\}$.
If $k>\min\{r_1,r_2\}$, then the projection Procrustes problem~\eqref{eq.mca program 4} is infeasible.
Otherwise, select any $k$-truncated singular value decomposition $W_1\Sigma W_2^\top$ of $Z_1Z_2^\top$.
Then $(B_1,B_2)=(W_1^\top,W_2^\top)$ is a solution to~\eqref{eq.mca program 4}.
\end{lemma}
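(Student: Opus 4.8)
The plan is to turn the projection Procrustes problem into a trace maximization and then read off the optimizer from a singular value decomposition of $Z_1Z_2^\top$.

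First I would dispose of feasibility. Since $B_i\in\mathbb{R}^{k\times r_i}$, the constraint $B_iB_i^\top=I_k$ forces $\operatorname{rank}B_i=k$, which is possible precisely when $k\le r_i$ (any $k$ orthonormal rows in $\mathbb{R}^{r_i}$ suffice); hence the feasible set is nonempty if and only if $k\le\min\{r_1,r_2\}$, giving the claimed infeasibility when $k>\min\{r_1,r_2\}$. Next I would rewrite the objective. Expanding the squared Frobenius norm as a sum of three traces and substituting the hypotheses $Z_iZ_i^\top=nI_{r_i}$ together with the constraints $B_iB_i^\top=I_k$, the two ``diagonal'' terms each collapse to $nk$, leaving
\[
\tfrac1n\|B_1Z_1-B_2Z_2\|_F^2=2k-\tfrac2n\,\tr\!\big(B_1MB_2^\top\big),\qquad M:=Z_1Z_2^\top .
\]
Thus minimizing the objective over the feasible set is equivalent to maximizing $\tr(B_1MB_2^\top)$ subject to $B_iB_i^\top=I_k$.

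The crux is to show $\tr(B_1MB_2^\top)\le\sum_{i=1}^k\sigma_i$, where $\sigma_1\ge\sigma_2\ge\cdots$ denote the singular values of $M$. For the bound I would write $\tr(B_1MB_2^\top)=\tr\!\big(M(B_2^\top B_1)\big)$ and note that $C:=B_2^\top B_1$ has operator norm at most $\|B_2^\top\|\,\|B_1\|=1$ and rank at most $k$; von Neumann's trace inequality then yields $\tr(MC)\le\sum_i\sigma_i(M)\,\sigma_i(C)\le\sum_{i=1}^k\sigma_i$. Alternatively, and self-containedly, substituting a full SVD $M=\widetilde W_1\,\mathrm{diag}(\sigma_a)\,\widetilde W_2^\top$ and setting $P:=B_1\widetilde W_1$, $Q:=B_2\widetilde W_2$ (which still satisfy $PP^\top=QQ^\top=I_k$) reduces the trace to $\sum_a\sigma_a\langle p_a,q_a\rangle$, where $p_a,q_a$ are the $a$th columns of $P,Q$. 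Cauchy--Schwarz and AM--GM give $\langle p_a,q_a\rangle\le\tfrac12(\|p_a\|^2+\|q_a\|^2)$, and since $\sum_a\|p_a\|^2\le k$ and $\sum_a\|q_a\|^2\le k$ with each $\|p_a\|,\|q_a\|\le1$, the fact that the $\sigma_a$ are sorted decreasingly means the constrained sum is maximized by placing unit weight on the $k$ largest, giving $\sum_{i=1}^k\sigma_i$.

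Finally I would verify attainment: for the $k$-truncated SVD $M=W_1\Sigma W_2^\top$ the choice $(B_1,B_2)=(W_1^\top,W_2^\top)$ is feasible because each $W_i$ has orthonormal columns, and $\tr(W_1^\top M W_2)=\tr\Sigma=\sum_{i=1}^k\sigma_i$, matching the upper bound; translating back through the identity above shows this pair minimizes~\eqref{eq.mca program 4}. I expect the main obstacle to be the optimality half, namely pinning down the upper bound and the rearrangement step that certifies the SVD directions, rather than any other $k$-dimensional alignment, extract the largest possible cross-correlation; the feasibility check and the algebraic reduction of the objective are routine by comparison.
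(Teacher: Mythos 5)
Your proof is correct and follows essentially the same route as the paper's: use the constraint $B_iB_i^\top=I_k$ together with $Z_iZ_i^\top=nI_{r_i}$ to collapse the diagonal terms, reduce the problem to maximizing the cross-term trace, bound that trace by $\sum_{l\in[k]}\sigma_l(Z_1Z_2^\top)$ via the von Neumann trace inequality, and check that the $k$-truncated SVD attains the bound. Your explicit verification of attainment and your self-contained rearrangement-style proof of the trace bound fill in details the paper leaves implicit, but the core argument is the same.
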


\begin{proof}
Since $B_i$ is a $k\times r_i$ matrix, the constraint $B_iB_i^\top=I_k$ requires $k\leq r_i$.
Suppose $k\leq\min\{r_1,r_2\}$, and consider any feasible point $(B_1,B_2)$ in program~\eqref{eq.mca program 4}.
Then
\[
\|B_iZ_i\|_F^2
=\operatorname{tr}(Z_i^\top B_i^\top B_iZ_i)
=\operatorname{tr}(B_i^\top B_iZ_iZ_i^\top)
=n\operatorname{tr}(B_i^\top B_i)
=n\operatorname{tr}(B_iB_i^\top)
=nk,
\]
and so the objective is proportional to
\begin{align*}
\|B_1Z_1-B_2Z_2\|_F^2
&=\|B_1Z_1\|_F^2-2\operatorname{tr}(Z_1^\top B_1^\top B_2Z_2)+\|B_2Z_2\|_F^2\\
&=2nk-2\operatorname{tr}((Z_2Z_1^\top)(B_1^\top B_2))
\geq 2nk-2\sum_{l\in[k]}\sigma_l(Z_2Z_1^\top),
\end{align*}
where the last step applies the von Neumann trace inequality (see Section~7.4.1 in~\cite{HornJ:12}).
This inequality is saturated when the columns of $B_1^\top$ and $B_2^\top$ are leading left and right singular vectors of $Z_1Z_2^\top$.
\end{proof}

As a consequence of Lemma~\ref{lem.proj proc prob}, we now have a fast method to solve program~\eqref{eq.mca program 2}, which we summarize in Algorithm~\ref{alg.mca}; we refer to this algorithm as matching component analysis (MCA).
(To be clear, given a matrix $A\in\mathbb{R}^{m\times n}$ of rank $r$, the \textbf{thin singular value decomposition} $A=U\Sigma V^\top$ consists of $U\in\mathbb{R}^{m\times r}$ and $V\in\mathbb{R}^{n\times r}$, both with orthonormal columns, and a diagonal matrix $\Sigma\in\mathbb{R}^{r\times r}$ of the positive singular values of $A$.)
Recalling our application, we note that matching data is an expensive enterprise, and so we wish to solve program~\eqref{eq.mca program 2} using as few samples as possible.
For this reason, we are interested in determining how many samples it takes for~\eqref{eq.mca program 2} to well approximate the original program~\eqref{eq.mca program 1}.
We summarize our study of MCA sample complexity in the remainder of this section.

\begin{algorithm}[t]
\SetAlgoLined
\KwData{$\{x_{ij}\}_{j\in[n]}$ in $\mathbb{R}^{d_i}$ for $i\in\{1,2\}$ and $k\in\mathbb{N}$}
\KwResult{$A_i\in\mathbb{R}^{k\times d_i}$ and $b_i\in\mathbb{R}^k$ for $i\in\{1,2\}$ such that 
\begin{itemize}
\item[(i)]
$\{A_ix_{ij}+b_i\}_{j\in[n]}$ has zero mean and identity covariance for both $i\in\{1,2\}$, and
\item[(ii)]
$A_1x_{1j}+b_1\approx A_2x_{2j}+b_2$ for every $j\in[n]$.
\end{itemize}}
\textit{Step 1: Normalize the data.}\\
\For{$i\in\{1,2\}$}{
Put $\overline{x}_i=\frac{1}{n}\sum_{j\in[n]}x_{ij}$, $\Sigma_i=\frac{1}{n}\sum_{j\in[n]}(x_{ij}-\overline{x}_i)(x_{ij}-\overline{x}_i)^\top$, and $r_i=\operatorname{rank}\Sigma_i$.\\
Compute thin singular value decomposition $V_i\Lambda_i V_i^\top$ of $\Sigma_i$.\\
Construct $r_i\times n$ matrix $Z_i$ with $j$th column given by $\Lambda_i^{-1/2}V_i^\top(x_{ij}-\overline{x}_i)$.}
\medskip
\textit{Step 2: Solve the projection Procrustes problem.}\\
\eIf{$k>\min\{r_1,r_2\}$}{Return \texttt{INFEASIBLE}}{
Compute $k$-truncated singular value decomposition $W_1\Sigma W_2^\top$ of $Z_1 Z_2^\top$.\\
For each $i\in\{1,2\}$, put $A_i=W_i^\top \Lambda_i^{-1/2} V_i^\top$ and $b_i=-A_i\overline{x}_i$.
}
 \caption{Matching component analysis
 \label{alg.mca}}
\end{algorithm}

\subsection{Sample complexity of MCA approximation}

Given a random $X:=[X_1;X_2]\in\mathbb{R}^{d_1}\times\mathbb{R}^{d_2}$, consider the covariances
\[
\Sigma_{X_i}:=\mathbb{E}(X_i-\mathbb{E}X_i)(X_i-\mathbb{E}X_i)^\top
\]
for $i\in\{1,2\}$.
We are interested in minimizing
\[
f_X(A)
=f_X(A_1,A_2)
:=\mathbb{E}\|A_1(X_1-\mathbb{E}X_1)-A_2(X_2-\mathbb{E}X_2)\|_2^2
\]
over the subset of $V:=\mathbb{R}^{k\times d_1}\times \mathbb{R}^{k\times d_2}$ defined by
\[
S_X:=\{(A_1,A_2)\in V:A_i\Sigma_{X_i}A_i^\top=I,~i\in\{1,2\}\}.
\]
Given $n$ independent instances of $X$, we may approximate the distribution of $X$ with the uniform distribution over these $n$ independent instances, producing the random vector $\hat{X}$.
Notice that $\hat{X}_i$ has mean $\hat\mu_i$ and covariance $\hat\Sigma_i$, as defined in \eqref{eq.mean cov est}.
We therefore have the following convenient expressions for \eqref{eq.mca program 1} and \eqref{eq.mca program 2}:
\[
\eqref{eq.mca program 1}
=\min_{A\in S_X}f_X(A),
\qquad
\eqref{eq.mca program 2}
=\min_{A\in S_{\hat{X}}}f_{\hat{X}}(A).
\]
The following is our first result on MCA sample complexity:

\begin{theorem}
\label{thm.main result 1}
Fix $p\in(0,1]$.
There exists $C=C(p)>0$ such that the following holds:
Suppose $\|X-\mathbb{E}X\|_{2,\infty}\leq\beta$ almost surely and $\min_{i\in\{1,2\}}\lambda_{\operatorname{min}}(\Sigma_{X_i})\geq\sigma^2>0$.
Then for every $\epsilon\in(0,1]$, it holds that
\[
\Big|\min_{A\in S_{\hat{X}}}f_{\hat{X}}(A)-\min_{A\in S_X}f_X(A)\Big|
\leq \epsilon\cdot\frac{\beta^2}{\sigma^2}
\]
in an event of probability $\geq 1-p$, provided
\[
n
\geq C\Big((d_1+d_2)\cdot\tfrac{k}{\epsilon^2}\log(\tfrac{k}{\epsilon^2})+(\tfrac{\beta}{\epsilon\sigma})^4\cdot\log(d_1+d_2)\Big).
\]
\end{theorem}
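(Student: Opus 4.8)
The plan is to solve both optimization problems in closed form, and then compare their optimal values by transporting optimizers between the two constraint sets rather than bounding everything symmetrically.

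First I would reduce both programs to trace problems. Writing $\Sigma_{X_1X_2}:=\E(X_1-\E X_1)(X_2-\E X_2)^\top$ for the population cross-covariance and $\hat\Sigma_{12}$ for its empirical analogue (using $\hat\mu_i$ from \eqref{eq.mean cov est}), expanding the objective and using the constraints gives $\min_{A\in S_X}f_X(A)=2k-2\max\{\tr(A_1\Sigma_{X_1X_2}A_2^\top):A_i\Sigma_{X_i}A_i^\top=I_k\}$. Since $\lambda_{\min}(\Sigma_{X_i})\ge\sigma^2>0$ forces $\Sigma_{X_i}$ invertible, the whitening substitution $A_i=B_i\Sigma_{X_i}^{-1/2}$ is legitimate (feasibility then just needs $k\le\min\{d_1,d_2\}$), and Lemma~\ref{lem.proj proc prob} yields $\min_{A\in S_X}f_X(A)=2k-2\|M\|_{(k)}$ with $M:=\Sigma_{X_1}^{-1/2}\Sigma_{X_1X_2}\Sigma_{X_2}^{-1/2}$, where $\|\cdot\|_{(k)}$ denotes the sum of the top $k$ singular values (the Ky Fan $k$-norm). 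On the event that each $\hat\Sigma_i$ is invertible the same computation gives $\min_{A\in S_{\hat X}}f_{\hat X}(A)=2k-2\|\hat M\|_{(k)}$ with $\hat M:=\hat\Sigma_1^{-1/2}\hat\Sigma_{12}\hat\Sigma_2^{-1/2}$, so the quantity to control is $2\,|\,\|\hat M\|_{(k)}-\|M\|_{(k)}\,|$.

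Next comes the transport idea. If $B_1^\star,B_2^\star$ are the population optimizers (\emph{deterministic} frames with $B_i^\star B_i^{\star\top}=I_k$), then $\tilde A_i:=B_i^\star\hat\Sigma_i^{-1/2}$ is feasible for the empirical program, so $\min_{S_{\hat X}}f_{\hat X}\le f_{\hat X}(\tilde A)=2k-2\,\tr(B_1^\star\hat M B_2^{\star\top})$ and hence $\min_{S_{\hat X}}f_{\hat X}-\min_{S_X}f_X\le 2\,\tr(B_1^\star(M-\hat M)B_2^{\star\top})$. Because the frames are fixed, the cross-covariance part $\tr(B_1^\star\Sigma_{X_1}^{-1/2}(\Sigma_{X_1X_2}-\hat\Sigma_{12})\Sigma_{X_2}^{-1/2}B_2^{\star\top})$ is a \emph{single} i.i.d.\ average of scalars bounded by $\beta^2/\sigma^2$ (using $\|B_i^\star\|_{\mathrm{op}}=1$, $\|\Sigma_{X_i}^{-1/2}\|_{\mathrm{op}}\le\sigma^{-1}$, $\|X_i-\E X_i\|\le\beta$), controlled by a scalar Bernstein bound with no dimension penalty. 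The reverse direction transports the empirical optimizer, which is data-dependent, and therefore only yields $\min_{S_X}f_X-\min_{S_{\hat X}}f_{\hat X}\le 2\|\hat M-M\|_{(k)}$; this genuinely requires a uniform estimate, and is where the dimension enters.

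To bound $\|\hat M-M\|_{(k)}$ I would split $\hat M-M=(M'-M)+(\hat M-M')$ with $M':=\Sigma_{X_1}^{-1/2}\hat\Sigma_{12}\Sigma_{X_2}^{-1/2}$, first subtracting the rank-one mean correction $(\hat\mu_1-\E X_1)(\hat\mu_2-\E X_2)^\top$ from $\hat\Sigma_{12}$ (with $\|\hat\mu_i-\E X_i\|$ controlled by a vector Bernstein bound) so that the cross-covariance term is a true i.i.d.\ average. For $\|M'-M\|_{(k)}=\max_{B_iB_i^\top=I_k}\tr(B_1(M'-M)B_2^\top)$, each fixed frame pair again gives an i.i.d.\ average bounded by $\beta^2/\sigma^2$; covering each Stiefel manifold $\{B_i:B_iB_i^\top=I_k\}$ (dimension $\lesssim kd_i$) with an $\eta$-net of cardinality $(C/\eta)^{kd_i}$, union-bounding, and tuning $\eta\sim\epsilon/\sqrt k$ produces a deviation $\lesssim\frac{\beta^2}{\sigma^2}\sqrt{k(d_1+d_2)\log(k/\epsilon^2)/n}$, giving the first sample-complexity term. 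The whitening term $\|\hat M-M'\|_{(k)}$ is handled by operator-norm matrix Bernstein, $\|\hat\Sigma_i-\Sigma_{X_i}\|_{\mathrm{op}}\lesssim\beta^2(\sqrt{\log(d_1+d_2)/n}+\log(d_1+d_2)/n)$, which on a high-probability event keeps $\lambda_{\min}(\hat\Sigma_i)\ge\sigma^2/2$ (simultaneously certifying feasibility) and makes $t\mapsto t^{-1/2}$ operator-Lipschitz with constant $\lesssim\sigma^{-3}$; propagating this through $\hat M-M'$ via a Cauchy–Schwarz split that exploits $\tr(\hat B_i^{\star}\hat B_i^{\star\top})=k$ then demands covariance accuracy $\|\hat\Sigma_i-\Sigma_{X_i}\|_{\mathrm{op}}\lesssim\epsilon^2\sigma^2$, yielding the second term $(\beta/(\epsilon\sigma))^4\log(d_1+d_2)$.

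I expect the main obstacle to be twofold. First, the net argument must be run sharply: one has to avoid the lossy bound $\|\cdot\|_{(k)}\le k\|\cdot\|_{\mathrm{op}}$ (which would cost a spurious factor of $k$ and give $k^2$ rather than the correct linear $k$) and instead exploit the rank-$k$ frame structure, arguing that $\eta\sim\epsilon/\sqrt k$ controls both the Lipschitz discretization of $(B_1,B_2)\mapsto\tr(B_1(M'-M)B_2^\top)$ (with a self-bounding absorption of the residual $\eta\|M'-M\|$ term) and the boundedness needed per frame. Second, the whitening contribution must be accounted for without an extra factor of $k$ and with the stated $\epsilon^{-4}$ scaling; a naive linear propagation of $\|\hat\Sigma_i^{-1/2}-\Sigma_{X_i}^{-1/2}\|_{\mathrm{op}}\lesssim\sigma^{-3}\|\hat\Sigma_i-\Sigma_{X_i}\|_{\mathrm{op}}$ would import both, so the Cauchy–Schwarz/AM–GM split trading a factor of $k$ for a \emph{squared} covariance accuracy is the delicate point. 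Finally I would intersect the feasibility event, the two deviation events, and the mean-estimation event, choosing constants so that all hold simultaneously with probability at least $1-p$.
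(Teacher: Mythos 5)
Your proposal is correct in outline and takes a genuinely different route from the paper's proof. The paper never solves either program in closed form: its engine is the abstract stability result Lemma~\ref{lem.key lemma} --- if the two constraint sets are Hausdorff-close, the objectives are Lipschitz, and the objectives are uniformly close on those sets, then the optima are close. The Hausdorff distance is controlled by covariance concentration through the transport $A_i\mapsto A_i\Sigma_{X_i}^{1/2}\Sigma_{\hat{X}_i}^{-1/2}$ and a square-root perturbation bound for matrix square roots (Lemmas~\ref{lem.SX is bounded}, \ref{lem.bound haus dist}, \ref{lem.bound spectral distance}), producing the $(\beta/(\epsilon\sigma))^4\log(d_1+d_2)$ term; uniform closeness comes from a net over the ball $T_\alpha$ plus pointwise Hoeffding and a Lipschitz extension (Lemmas~\ref{lem.lipschitz bound}, \ref{lem.net exists}, \ref{lem.pointwise bound}), producing the $k(d_1+d_2)\epsilon^{-2}\log(k/\epsilon^2)$ term. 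You instead use Lemma~\ref{lem.proj proc prob} to solve both programs exactly, reducing the theorem to a perturbation bound for the Ky Fan $k$-norm of the whitened cross-covariance; your Stiefel net then plays precisely the role of the paper's net over $T_\alpha$ (same cardinality, same scalar Hoeffding input, same first term), and your whitening term plays the role of the paper's Hausdorff term (same matrix-concentration input, same second term). Your route is more concrete and makes the origin of each sample-complexity term transparent, and your one-sided transport of the deterministic population optimizer is a nice observation; the paper's route is more modular, since Lemma~\ref{lem.key lemma} does not require the programs to admit a spectral solution.

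There is, however, one under-justified step, and it is exactly the one you flag as delicate. Writing $M':=\Sigma_{X_1}^{-1/2}\hat\Sigma_{12}\Sigma_{X_2}^{-1/2}$, $\hat M:=\hat\Sigma_1^{-1/2}\hat\Sigma_{12}\hat\Sigma_2^{-1/2}$, and $\delta:=\max_i\|\hat\Sigma_i-\Sigma_{X_i}\|_{2\to2}$, your Cauchy--Schwarz split exploiting $\tr(B_iB_i^\top)=k$ yields a bound of the form $\|\hat M-M'\|_{(k)}\lesssim k\,\|I-\Sigma_{X_1}^{-1/2}\hat\Sigma_1^{1/2}\|_{2\to2}\lesssim k\sqrt{\delta}/\sigma$, so the accuracy $\delta\lesssim\epsilon^2\sigma^2$ that you demand gives $k\epsilon$ rather than the needed $\epsilon\beta^2/\sigma^2$: the factor of $k$ has not actually been traded away. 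The repair exists but you never state it: feasibility of the population program forces $k\leq\min\{d_1,d_2\}$, while $d_i\sigma^2\leq d_i\lambda_{\operatorname{min}}(\Sigma_{X_i})\leq\tr(\Sigma_{X_i})=\E\|X_i-\E X_i\|_2^2\leq\beta^2$, hence $k\leq\beta^2/\sigma^2$ and $k\epsilon\leq\epsilon\beta^2/\sigma^2$ after all; one then checks that the resulting requirement on $n$ stays below the stated $(\beta/(\epsilon\sigma))^4\log(d_1+d_2)$ term. Alternatively, you can avoid the factor of $k$ outright: since $\hat\Sigma_{12}$ is an average of outer products $\hat u_j\hat v_j^\top$ with $\|\hat u_j\|_2,\|\hat v_j\|_2\leq 2\beta$, Cauchy--Schwarz at the vector level gives $|\tr(B_1(\hat M-M')B_2^\top)|\lesssim\beta^2\sigma^{-1}\max_i\|\hat\Sigma_i^{-1/2}-\Sigma_{X_i}^{-1/2}\|_{2\to2}\lesssim\beta^2\sqrt{\delta}/\sigma^3$, which is at most $\epsilon\beta^2/\sigma^2$ exactly when $\delta\lesssim\epsilon^2\sigma^2$. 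With either repair (and noting that your ``no dimension penalty'' remark for the transported direction is moot, since $\hat M$ still contains $\hat\Sigma_i^{-1/2}$ and the reverse direction needs the uniform bound anyway), your argument goes through within the stated sample complexity.
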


Note that the boundedness assumption $\|X-\mathbb{E}X\|_{2,\infty}\leq\beta$ is reasonable in practice since, for example, black-and-white images have pixel values that range from 0 to 255.
Also, we may assume $\lambda_{\operatorname{min}}(\Sigma_{X_i})>0$ without loss of generality by restricting $\mathbb{R}^{d_i}$ to the image of $\Sigma_{X_i}$ if necessary.
We prove this theorem in Section~\ref{sec.proof of first main result} using ideas from matrix analysis and high dimensional probability.

\subsection{Conditions for exact matching}
\label{subsec.exact match}

Next, we consider a family of random vectors that are particularly well suited for matching component analysis.
Suppose our probability space $(\Omega,\mathcal{F},\mathbb{P})$ takes the form $(\mathbb{R}^D,\mathcal{B},\mathbb{P})$ for some unknown $D\in\mathbb{N}$.
We say $X\in\mathbb{R}^d$ is an \textbf{affine linear random vector}
if there exists $S\in\mathbb{R}^{d\times D}$ and $\mu\in\mathbb{R}^{d}$ such that $X(\omega)=S\omega+\mu$ for every $\omega\in\mathbb{R}^D$.
While every random vector can be viewed as an affine linear random vector over the appropriate probability space, we will be interested in relating two affine linear random vectors over a common probability space.
Since $D$ and $\mathbb{P}$ are both unknown, we may assume without loss of generality that $\omega$ has zero mean and identity covariance in $\mathbb{R}^D$, and so $X$ has mean $\mu$ and covariance $SS^\top$.

Let $X_1$ and $X_2$ be affine linear random vectors, and suppose we encounter affine linear functions $g_1$ and $g_2$ such that $g_1(X_1)=g_2(X_2)$.
Then $g_i(X_i(\omega))$ determines $\omega$ up to a coset of some subspace $K\subseteq\mathbb{R}^D$, and the smaller this subspace is, the better we can predict $Y(\omega)$.
As one might expect, there is a limit to how small $K$ can be:

\begin{lemma}
\label{lem.kernel limit}
Suppose $X_i(\omega)=S_i\omega+\mu_i$ for each $i\in\{1,2\}$.
Then $A_1X_1+b_1=A_2X_2+b_2$ implies $A_1S_1=A_2S_2=:T$, which in turn implies $\operatorname{ker}T\supseteq\operatorname{ker}S_1+\operatorname{ker}S_2$.
\end{lemma}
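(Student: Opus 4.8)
The plan is to substitute the affine-linear forms into the hypothesized identity and read off the coefficient of $\omega$. Interpreting $A_1X_1+b_1=A_2X_2+b_2$ as almost-sure equality of random vectors on the probability space $(\mathbb{R}^D,\mathcal{B},\mathbb{P})$, I would write $A_iX_i+b_i=A_iS_i\omega+(A_i\mu_i+b_i)$ and subtract the two sides to obtain
\[
(A_1S_1-A_2S_2)\omega=(A_2\mu_2+b_2)-(A_1\mu_1+b_1)\qquad\text{almost surely.}
\]
Setting $M:=A_1S_1-A_2S_2$ and letting $c$ denote the constant right-hand side, the entire matching identity collapses to the single statement that the affine function $\omega\mapsto M\omega-c$ vanishes almost surely.

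The key step is to promote this almost-sure constancy to the algebraic identity $M=0$, and here I would exploit the normalization that $\omega$ has zero mean and identity covariance. Taking expectations in $M\omega=c$ and using $\mathbb{E}\omega=0$ forces $c=0$, so that $M\omega=0$ almost surely. Multiplying on the right by $\omega^\top$ and taking expectations then gives $M\,\mathbb{E}[\omega\omega^\top]\,M^\top=MM^\top=0$, since $\mathbb{E}[\omega\omega^\top]=I_D$; and $MM^\top=0$ forces $M=0$ (its diagonal entries are the squared row norms of $M$). Hence $A_1S_1=A_2S_2$, and we may define $T:=A_1S_1=A_2S_2$ as claimed.

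The remaining kernel inclusion is a purely linear-algebraic consequence. For each $i\in\{1,2\}$ and any $v\in\operatorname{ker}S_i$ we have $Tv=A_iS_iv=A_i\cdot 0=0$, so $\operatorname{ker}S_i\subseteq\operatorname{ker}T$. Because $\operatorname{ker}T$ is a subspace of $\mathbb{R}^D$ containing both $\operatorname{ker}S_1$ and $\operatorname{ker}S_2$, it contains their sum, giving $\operatorname{ker}T\supseteq\operatorname{ker}S_1+\operatorname{ker}S_2$.

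I expect no serious obstacle here; the only point requiring care is the passage from almost-sure vanishing of an affine function of $\omega$ to the matrix identity $M=0$, which is precisely where the identity-covariance normalization (rather than merely full-dimensional support of $\omega$) does the work. If one instead reads the hypothesis as holding for every $\omega\in\mathbb{R}^D$ pointwise, this step becomes immediate by evaluating at $\omega=0$ and at the standard basis vectors, but the second-moment argument above handles the measure-theoretic reading uniformly.
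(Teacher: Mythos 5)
Your proof is correct, but it takes a genuinely different route from the paper's. The paper reads the hypothesis $A_1X_1+b_1=A_2X_2+b_2$ as a pointwise identity of functions on $\mathbb{R}^D$ and disposes of it in one line of pure linear algebra: subtracting the equality evaluated at $\omega=0$ from the equality evaluated at an arbitrary $\omega$ gives $(A_iX_i(\omega)+b_i)-(A_iX_i(0)+b_i)=A_iS_i\omega$, hence $A_1S_1\omega=A_2S_2\omega$ for every $\omega$, i.e.\ $A_1S_1=A_2S_2$ --- no probabilistic structure, and in particular no moment assumptions on $\mathbb{P}$, are used at all. You instead adopt the almost-sure reading and exploit the paper's standing normalization that $\omega$ has zero mean and identity covariance: expectations kill the constant $c$, and the second-moment identity $MM^\top=M\,\mathbb{E}[\omega\omega^\top]\,M^\top=0$ forces $M=0$. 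Your argument is sound (integrability follows from the finite second moments, and positive-definite covariance is exactly what makes the last step work), and it buys robustness to null sets under the measure-theoretic reading; what it costs is dependence on the covariance normalization, which the paper invokes only as a without-loss-of-generality convention and which its own proof never needs. Note also that the pointwise reading is the one actually used downstream --- Definition~\ref{eq.alm}(i) demands the matching identity for \emph{all} $\omega\in\mathbb{R}^D$ --- and, as you correctly observe, under that reading your moment argument can be replaced by evaluation at $\omega=0$ and at basis vectors, which is essentially the paper's proof. The kernel-inclusion step ($\operatorname{ker}S_i\subseteq\operatorname{ker}T$ plus closure of $\operatorname{ker}T$ under addition) is identical in both arguments.
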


\begin{proof}
Suppose $A_1X_1+b_1=A_2X_2+b_2$.
Since
\[
(A_iX_i(\omega)+b_i)-(A_iX_i(0)+b_i)
=A_iS_i\omega,
\]
it follows that $A_1S_1=A_2S_2$.
For each $i\in\{1,2\}$, we have $T=A_iS_i$, and so $\operatorname{ker}S_i\subseteq\operatorname{ker}T$.
Since $\operatorname{ker}T$ is closed under addition, the result follows.
\end{proof}

\begin{definition}
\label{eq.alm}
Given $d_1,d_2,n\in\mathbb{N}$, the corresponding \textbf{affine linear model} $\operatorname{ALM}(d_1,d_2,n)$ receives a distribution $\mathbb{P}$ over some real vector space $\mathbb{R}^D$ and returns the random function
\[
\mathcal{E}_{\mathbb{P}}\colon(S_1,\mu_1,S_2,\mu_2)\mapsto\{S_i\omega_j+\mu_i\}_{i\in\{1,2\},j\in[n]}
\]
defined over all $S_i\in\mathbb{R}^{d_i\times D}$ and $\mu_i\in\mathbb{R}^{d_i}$, and with $\{\omega_j\}_{j\in[n]}$ drawn independently with distribution $\mathbb{P}$.
We say $\operatorname{ALM}(d_1,d_2,n)$ is \textbf{exactly matchable} if there exists a measurable function
\[
\mathcal{D}\colon\{x_{ij}\}_{i\in\{1,2\},j\in[n]}\mapsto(A_1,b_1,A_2,b_2)
\]
such that for every $D\in\mathbb{N}$, every continuous probability distribution $\mathbb{P}$ over $\mathbb{R}^D$, and every input $(S_1,\mu_1,S_2,\mu_2)$, the random tuple
\[
(A_1,b_1,A_2,b_2):=(\mathcal{D}\circ\mathcal{E}_{\mathbb{P}})(S_1,\mu_1,S_2,\mu_2)
\]
almost surely satisfies both
\begin{itemize}
\item[(i)]
$A_1(S_1\omega+\mu_1)+b_1=A_2(S_2\omega+\mu_2)+b_2$ for all $\omega\in\mathbb{R}^D$, and
\item[(ii)]
$\operatorname{ker}A_iS_i=\operatorname{ker}S_1+\operatorname{ker}S_2$.
\end{itemize}
\end{definition}


Our second result on MCA sample complexity provides a sharp phase transition for the affine linear model to be exactly matchable: 

\begin{theorem}\
\label{thm.main result}
\begin{itemize}
\item[(a)]
If $n\geq d_1+d_2+1$, then $\operatorname{ALM}(d_1,d_2,n)$ is exactly matchable.
\item[(b)]
If $n<d_1+d_2+1$, then $\operatorname{ALM}(d_1,d_2,n)$ is not exactly matchable.
\end{itemize}
\end{theorem}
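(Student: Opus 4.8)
The plan is to prove the two implications by quite different means: (a) by constructing an explicit, universal decoder and verifying it succeeds almost surely, and (b) by a measure-theoretic covering argument that rules out \emph{every} decoder.

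For (a), write $S=[S_1;S_2]\in\mathbb{R}^{(d_1+d_2)\times D}$ and recall from Lemma~\ref{lem.kernel limit} that conditions (i)--(ii) amount to producing $(A_1,A_2)$ with $[A_1,-A_2]S=0$ (so $A_1S_1=A_2S_2=:T$) together with $\operatorname{ker}T=\operatorname{ker}S_1+\operatorname{ker}S_2$, plus the affine correction $b_i=-A_i\hat\mu_i$. My decoder forms the empirical means $\hat\mu_i$ and the stacked centered data matrix $M$ whose $j$th column is $(x_{1j}-\hat\mu_1;\,x_{2j}-\hat\mu_2)=S(\omega_j-\hat\omega)$, computes $W:=(\operatorname{im}M)^\perp$, and takes the rows of $[A_1,-A_2]$ to be any basis of $W$ together with $b_i=-A_i\hat\mu_i$. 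The key genericity fact is that for continuous $\mathbb{P}$ and $n\geq\operatorname{rank}S+1$ the points $\{S\omega_j\}_{j\in[n]}$ almost surely affinely span $\operatorname{im}S$; since $\operatorname{rank}S\leq d_1+d_2\leq n-1$, this holds, so $\operatorname{im}M=\operatorname{im}S$ and hence $W=(\operatorname{im}S)^\perp$ almost surely. Then $[A_1,-A_2]S=0$ exactly, giving $A_1S_1=A_2S_2$, and a short computation shows $A_i(S_i\omega+\mu_i)+b_i=A_iS_i(\omega-\hat\omega)$ for all $\omega$, so (i) holds identically.

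For (ii) I would verify that the kernel is as small as possible by a rank count. Consider $\phi\colon W\to\mathbb{R}^{1\times D}$, $\phi(w_1,w_2)=w_1^\top S_1$ (which equals $-w_2^\top S_2$ since $(w_1;w_2)\in(\operatorname{im}S)^\perp$). Its kernel is $(\operatorname{im}S_1)^\perp\times(\operatorname{im}S_2)^\perp$, so
\[
\dim\operatorname{im}\phi=\big((d_1+d_2)-\operatorname{rank}S\big)-\big((d_1-\operatorname{rank}S_1)+(d_2-\operatorname{rank}S_2)\big)=\operatorname{rank}S_1+\operatorname{rank}S_2-\operatorname{rank}S,
\]
and inclusion--exclusion identifies the right-hand side with $D-\dim(\operatorname{ker}S_1+\operatorname{ker}S_2)$. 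Since the rows of $T=A_1S_1$ are the images under $\phi$ of a basis of $W$, they span $\operatorname{im}\phi$, so $\operatorname{rank}T=\dim\operatorname{im}\phi$ and thus $\dim\operatorname{ker}T=\dim(\operatorname{ker}S_1+\operatorname{ker}S_2)$; combined with the inclusion $\operatorname{ker}T\supseteq\operatorname{ker}S_1+\operatorname{ker}S_2$ from Lemma~\ref{lem.kernel limit}, equality follows. The only delicate point here is the affine-spanning genericity lemma, which I would isolate and prove by noting that a continuous distribution charges no proper affine subspace.

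For (b), fix an arbitrary decoder $\mathcal{D}$ and let $Z$ be the set of sample tuples on which $\mathcal{D}$ returns $(A_1,A_2)\neq(0,0)$; I will force $Z$ to be simultaneously Lebesgue-null and co-null on a family of slices that covers everything. First, the full-rank instance $B$ with $D=d_1+d_2$, $\mathbb{P}=N(0,I)$, $S=I$, $\mu=0$ has $\operatorname{im}S=\mathbb{R}^{d_1+d_2}$, so $(\operatorname{im}S)^\perp=\{0\}$ forces the unique valid output $A_1=A_2=0$; as the induced sample law is $N(0,I)^{\otimes n}$, equivalent to the Lebesgue measure $\lambda$ on $(\mathbb{R}^{d_1+d_2})^n$, success on $B$ requires $\lambda(Z)=0$. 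Second, for each affine hyperplane $H\subseteq\mathbb{R}^{d_1+d_2}$ (dimension $d_1+d_2-1$) I build an instance $A_H$ supported on $H$ with $\operatorname{rank}S_1=d_1$, $\operatorname{rank}S_2=d_2$, and $\operatorname{rank}S=d_1+d_2-1$, so that $\rho:=\operatorname{rank}S_1+\operatorname{rank}S_2-\operatorname{rank}S=1$; its valid outputs satisfy $\operatorname{rank}(A_1S_1)=\rho\geq 1$ and hence $(A_1,A_2)\neq(0,0)$. Success on $A_H$ therefore forces $Z$ to be co-null in $H^n$ with respect to Lebesgue measure on $H^n$.

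The crux is then a covering (coarea) argument. Because $n\leq d_1+d_2$, every tuple of $n$ points lies in a common affine hyperplane, so the slices $\{H^n\}$ sweep out all of $(\mathbb{R}^{d_1+d_2})^n$; parametrizing hyperplanes by unit normal and offset, the incidence set $\{(x,H):x\in H^n\}$ has dimension at least $n(d_1+d_2)$ and projects onto tuple space, so integrating the slicewise measures against the Jacobian yields $\lambda(Z)>0$ once $Z$ is co-null in almost every slice. This contradicts $\lambda(Z)=0$, so no $\mathcal{D}$ can succeed on both $B$ and all $A_H$, proving (b). I expect this coarea step---verifying that the slices genuinely cover tuple space with a positive Jacobian, so that slicewise co-nullity upgrades to positive measure---to be the main obstacle, together with checking that both instance families can be realized by genuinely continuous distributions.
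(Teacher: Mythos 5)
Your proposal is correct, and both halves take a genuinely different route from the paper's. For part (a), the paper's witness is MCA itself (Algorithm~\ref{alg.mca}) with the data-dependent choice $k=\operatorname{dim}(\operatorname{im}Z_1^\top\cap\operatorname{im}Z_2^\top)$, and its verification runs through the Procrustes solution (Lemma~\ref{lem.proj proc prob}), the polynomial genericity statement (Lemma~\ref{lem.generic}), and the kernel equivalence of Lemma~\ref{lem.key lemma 2} followed by a dimension count. Your decoder---rows of $[A_1,-A_2]$ forming a basis of $(\operatorname{im}M)^\perp$ for the stacked centered data matrix $M$---bypasses all three lemmas: your affine-spanning fact is valid (for any proper affine subspace $H\subsetneq\operatorname{im}S$, the preimage $S^{-1}(H)$ is a proper affine subspace of $\mathbb{R}^D$ and hence $\mathbb{P}$-null), it gives $\operatorname{ker}[A_1,-A_2]=(\operatorname{im}S)^\perp$ exactly, and your map $\phi$ yields $\operatorname{rank}T=\operatorname{rank}S_1+\operatorname{rank}S_2-\operatorname{rank}S$ directly, which together with the containment from Lemma~\ref{lem.kernel limit} closes Definition~\ref{eq.alm}(ii). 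This is shorter and more transparent; what the paper's route buys is the statement that the algorithm it actually proposes is itself a witness. Two small repairs: handle the degenerate case $(\operatorname{im}S)^\perp=\{0\}$ by outputting $A_i=0\in\mathbb{R}^{1\times d_i}$ (as the paper does when $k=0$), and fix a measurable rule for selecting the basis.

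For part (b), you and the paper use the same two instance families---the full-rank instance $S=I_{d_1+d_2}$ and hyperplane-supported instances---and the same obstruction, namely that $n\leq d_1+d_2$ samples always lie in an affine hyperplane, so the decoder cannot distinguish the families. Your distinguishing statistic (output zero versus output of rank one) is simpler than the paper's, which compares $\operatorname{dim}\operatorname{ker}[A_1,-A_2]$ across instances and therefore invokes Lemma~\ref{lem.key lemma 2} a second time. The step you flag as the crux is exactly where the paper does something slicker: instead of a coarea argument, it conditions the full-rank instance on the affine hull of the sample (parametrized by its shortest vector $v$), observes that the conditional sample law coincides with the sample law of the hyperplane instance $(S^{(v)},v)$ under $\mathbb{P}_2^{(v)}$, and then the two required almost-sure outputs are incompatible for a single measurable decoder fed equal-in-distribution data. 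Your coarea step is nevertheless correct and can be made rigorous without computing Jacobians: restrict to affinely independent tuples, note that the projection from the incidence set $\{(x,H):x\in H^n\}$ to tuple space is a submersion there, so the preimage of $Z$ (which is $\lambda$-null by the full-rank instance) is null in the incidence set, and Fubini over the hyperplane parameter then makes $Z\cap H^n$ Lebesgue-null in $H^n$ for almost every $H$, contradicting the co-nullity forced by the instances $A_H$. One repair here: your instance $A_H$ with $\operatorname{rank}S_1=d_1$ and $\operatorname{rank}S_2=d_2$ does not exist when the normal of $H$ lies in one of the two coordinate blocks (e.g., $H=\{x:x_1=0\}$ forces $\operatorname{rank}S_1=d_1-1$), but such hyperplanes form a null family, which is all your argument needs.
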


In particular, we use MCA to define a witness $\mathcal{D}$ for Theorem~\ref{thm.main result}(a).
We prove this theorem in Section~\ref{sec.proof of main result 2} using ideas from matrix analysis and algebraic geometry.

\section{Experiments}
\label{sec.experiments}

In this section, we perform several experiments to evaluate the efficacy of matching component analysis for transfer learning (see Table~\ref{table} for a summary).
For each experiment, in order to produce a matching set, we take an \textbf{example set} of labeled points from the testing domain and match them with members of the conventional training set.
(While the example set resides in the testing domain, it is disjoint from the test set in all of our experiments.)
Each experiment is described by the following features; see Figure~\ref{figure.venn} for an illustration.
\begin{itemize}
\item[] \textbf{training domain.}
Space where the conventional training set resides.
\item[] \textbf{testing domain.}
Space where the example and test sets reside.
\item[] \textbf{match.}
Method used to identify a matching set, which is comprised of pairs of points from the conventional training and example sets.
\item[] \textbf{\textit{n}.}
Size of example set.
\item[] \textbf{\textit{r}.}
Number of points from the conventional training set that are matched to each member of the example set, producing a matching set of size $nr$.
(While our theory assumes $r=1$, we find that taking $r>1$ is sometimes helpful in practice.)
\item[] \textbf{\textit{k}.}
Parameter selected for matching component analysis.
\end{itemize}

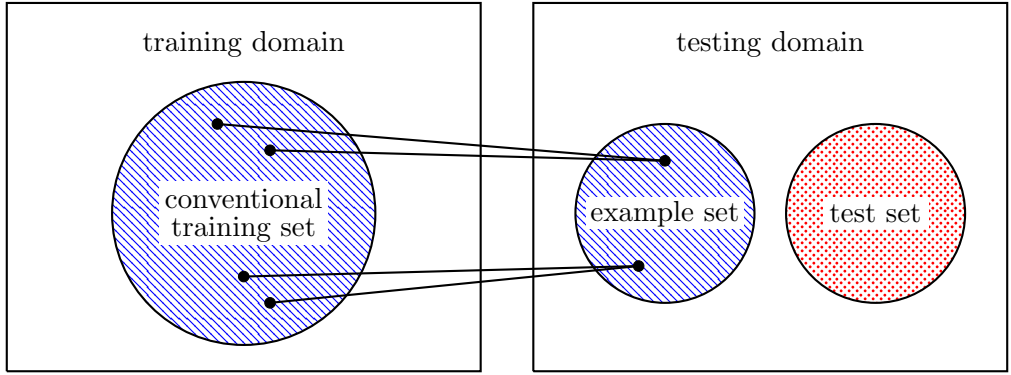
\begin{figure}
\begin{center}
\begin{tabular}{ll}
\begin{tikzpicture}[scale=0.7]
\coordinate (0) at (0-5,0);
\coordinate (3) at (0-5,3.23);
\coordinate (7) at (0-5,0.3);
\coordinate (8) at (0-5,-0.3);
\draw [thick] (-4.5-5,-3) -- (-4.5-5,4) -- (4.5-5,4) -- (4.5-5,-3) -- (-4.5-5,-3);
\draw [thick, pattern=north west lines, pattern color=blue] (0) circle [radius = 2.5];
\path [fill=white] (-5-1.6,0-0.6) rectangle (-5+1.6,0+0.6);
\node at (3){\small{training domain}};
\node at (7){\small{conventional}};
\node at (8){\small{training set}};
\coordinate (11) at (-2+5,0);
\coordinate (12) at (2+5,0);
\coordinate (13) at (0+5,3.23);
\coordinate (14) at (2+5,0);
\draw [thick] (-4.5+5,-3) -- (-4.5+5,4) -- (4.5+5,4) -- (4.5+5,-3) -- (-4.5+5,-3);
\draw [thick, pattern=north west lines, pattern color=blue] (11) circle [radius = 1.7];
\draw [thick, pattern=crosshatch dots, pattern color=red] (12) circle [radius = 1.7];
\path [fill=white] (3-1.5,0-0.3) rectangle (3+1.5,0+0.3);
\path [fill=white] (7-1,0-0.3) rectangle (7+1,0+0.3);
\node at (11){\small{example set}};
\node at (12){\small{test set}};
\node at (13){\small{testing domain}};
\coordinate (001) at (-4.5,1.2);
\coordinate (002) at (-5.5,1.7);
\coordinate (003) at (-5,-1.2);
\coordinate (004) at (-4.5,-1.7);
\coordinate (101) at (3,1);
\coordinate (102) at (2.5,-1);
\draw [fill] (001) circle [radius=0.1];
\draw [fill] (002) circle [radius=0.1];
\draw [fill] (003) circle [radius=0.1];
\draw [fill] (004) circle [radius=0.1];
\draw [fill] (101) circle [radius=0.1];
\draw [fill] (102) circle [radius=0.1];
\draw [thick] (001) -- (101);
\draw [thick] (002) -- (101);
\draw [thick] (003) -- (102);
\draw [thick] (004) -- (102);
\end{tikzpicture}
\end{tabular}
\end{center}
\caption{
\label{figure.venn}
\footnotesize{
Illustration of experimental setup in Section~\ref{sec.experiments}.
The goal is to train a classifier on a training set that performs well on a test set.
The training set, depicted in blue hatching, consists of both a conventional training set in the training domain and a small example set in the testing domain.
The test set, depicted in red dots, is unknown and resides in the testing domain.
Importantly, the example set is disjoint from the test set despite both residing in the testing domain.
We match each member of the example set to $r$ members of the conventional set to produce a matching set.
(In the above illustration, $r=2$.)
This matching set is then processed by MCA to identify mappings that send both the training domain and the testing domain to a common domain.
}}
\label{fig:roux}
\end{figure}

For each experiment, we run MCA to find affine linear mappings to the common domain $\mathbb{R}^k$, and then we train a k-nearest neighbor (k-NN) classifier in this domain on the conventional training set, and we test by first mapping the test set into the common domain.
For comparison, we consider two different baselines, which we denote by BL1 and BL2.
For BL1, we train a k-NN classifier on the example set (whose size is only $n$) and test on the test set.
For BL2, we train a k-NN classifier on the conventional training set (which resides in the training domain $\mathbb{R}^{d_1}$) and test on the test set (which resides in the testing domain $\mathbb{R}^{d_2}$).
This latter baseline is possible whenever $d_1=d_2$, which occurs in all of our experiments.
In order to isolate the performance of MCA in our experiments, we set the number of neighbors to be 10 for all of our k-NN classifiers.

In half of the experiments we consider, we are given a matching set with $r=1$, and in the other experiments, we are only given an example set.
In this latter case, we have the luxury of selecting $r$, and in both cases, we have the additional luxury of selecting $k$.
We currently do not have a rule of thumb for selecting these parameters, although we observe that overall performance is sensitive to the choice of parameters.
See Section~4 for more discussion along these lines.

\begin{table}[t]
\centering
\caption{Summary of transfer learning experiments\label{table}}
\medskip
\begin{tabular}{ | c c c c c c c c c | }\hline
\textbf{training domain} & \textbf{testing domain} & \textbf{match} & \textbf{\textit{n}} & \textbf{\textit{r}} & \textbf{\textit{k}} & \textbf{BL1} & \textbf{BL2} & \textbf{MCA} \\\hline
MNIST (1st half) & MNIST (2nd half) & $\ell^2$ & 2000 & 5 & 30 & 86\% & \textbf{94\%} & 90\%\\
MNIST (1st half) & MNIST (2nd half) & label & 2000 & 5 & 30 & 86\% & \textbf{94\%} & 69\%\\\hline
MNIST (crop) & MNIST (pixelate) & source & 20 & 1 & 19 & 18\% & 23\% & \textbf{83\%}\\
MNIST (crop) & MNIST (pixelate) & source & 2000 & 1 & 50 & 91\% & 23\% & \textbf{94\%}\\\hline
CF (2 \& 5) & MNIST (2 \& 5) & $\ell^2$ & 10 & 100 & 5 & 54\% & \textbf{98\%} & 84\%\\
CF (0 \& 1) & MNIST (0 \& 1) & $\ell^2$ & 10 & 100 & 5 & 55\% & \textbf{100\%} & \textbf{100\%}\\
CF (4 \& 9) & MNIST (4 \& 9) & $\ell^2$ & 10 & 100 & 5 & 51\% & \textbf{89\%} & 71\%\\\hline
SAMPLE (sim) & SAMPLE (real) & expert & 100 & 1 & 99 & 62\% & 20\% & \textbf{87\%} \\\hline
\end{tabular}
\end{table}

\subsection{Transfer learning from MNIST to MNIST}

For our first experiment, we tested the performance of the MCA algorithm in a seemingly trivial case: when the training and testing domains are identical.
Of course, the MCA algorithm should not outperform the baseline BL2 in this simple case.
However, this setup allows us to isolate the impact of using different matching procedures.  

We partitioned the training set of 60,000 MNIST digits into two subsets of equal size.
We arbitrarily chose the first 30,000 to represent the training domain, and interpreted the remaining 30,000 points as members of the testing domain.
We then matched $n$ of the points from the testing domain with $r=5$ of their nearest neighbors (in the Euclidean sense) in the training domain with the same label.
For a cheaper alternative, we also tried matching with $r=5$ randomly selected members of the training domain that have the same label.

As expected, MCA does not outperform the classifier trained on the entire training set (BL2).
However, with sufficiently many matches, MCA is able to find a low-dimensional embedding of $\mathbb{R}^{28\times 28}$ that still allows for accurate classification of digits.
Judging by the poor performance of the label-based matching, these experiments further illustrate the importance of a thoughtful matching procedure.
In general, when label classes exhibit large variance and yet the matching is determined by label information alone, we observe that MCA often fails to identify a common domain that allows for transfer learning.

\subsection{Transfer learning from cropped MNIST to pixelated MNIST}

Our second experiment replicates the affine linear setup from Subsection~\ref{subsec.exact match}.
Here, we view the MNIST dataset as a subset $M$ of a probability space $\Omega=\mathbb{R}^{28\times 28}$ with $\mathbb{P}$ distributed uniformly over $M$.
Next, we linearly transform the MNIST dataset by applying two different maps $\omega\mapsto X_i(\omega)$.
In particular, $X_1(\cdot)$ crops a given $28\times 28$ image to the middle $14\times 14$ portion, while $X_2(\cdot)$ forms a $14\times14$ pixelated version of the original image by averaging over each $2\times 2$ block; see Figure~\ref{fig.affine-linear} for an illustration.
We interpret the cropped images $\{X_1(\omega)\}_{\omega\in M}$ as belonging to the training domain and the pixelated images $\{X_2(\omega)\}_{\omega\in M}$ to the testing domain.
Notice that this setup delivers a natural matching between members of both domains, i.e., $X_1(\omega)$ is matched with $X_2(\omega)$ for every $\omega\in M$; as such, $r=1$.
We evaluate the performance of MCA against the baselines with both $n=20$ and $n=2000$.
These experiments are noteworthy because MCA beats both baselines for both small and large values of $n$.
We credit this behavior to the affine linear setup, since in general, we find that MCA beats BL1 only when $n$ is small.
See Figure~\ref{fig.affine-linear} for a visualization of the information captured in the common domain.

\begin{figure}[t]
\centering
\includegraphics[width=0.7\textwidth]{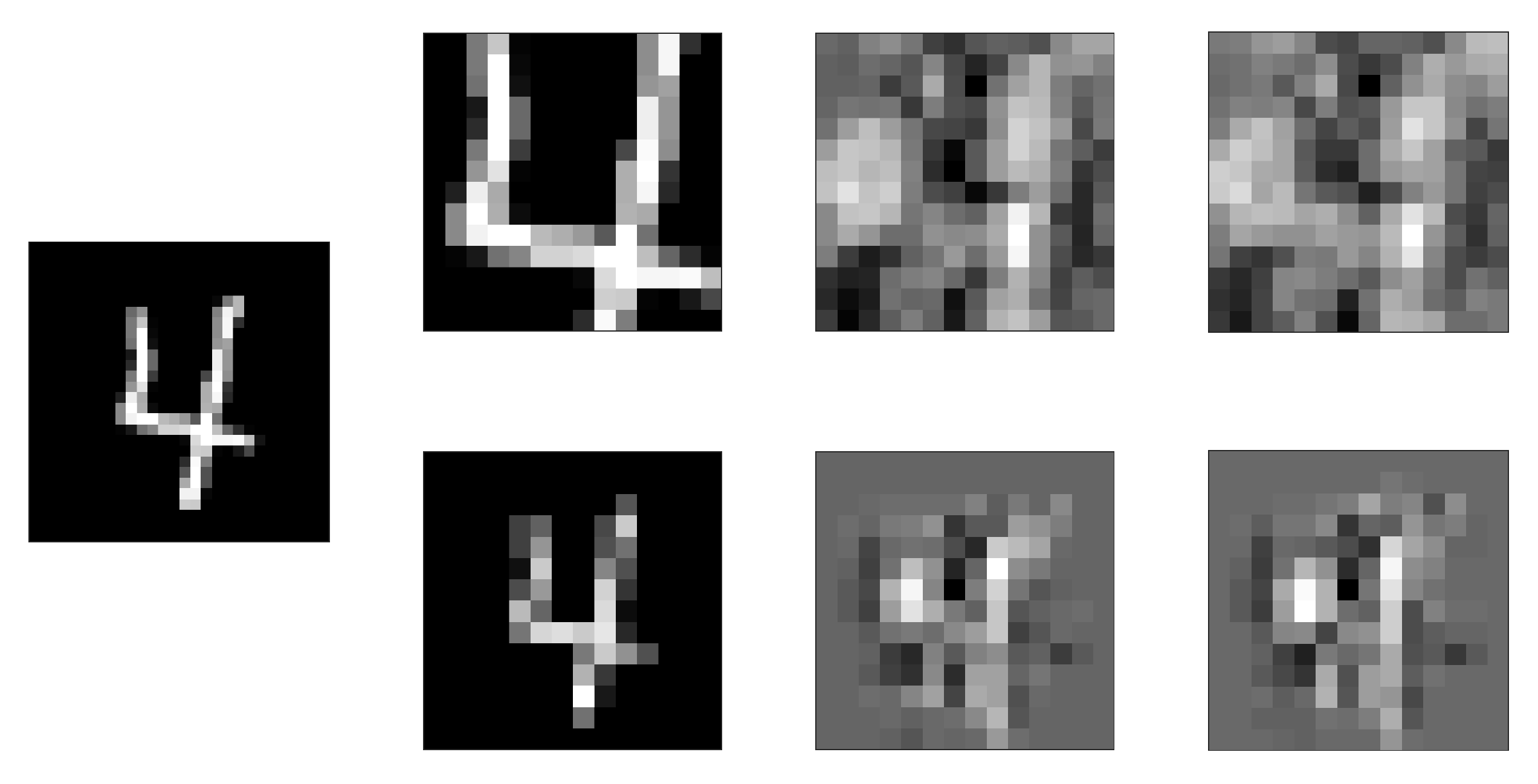}
\caption{\footnotesize{Transfer learning from cropped MNIST digits to pixelated MNIST digits.
We crop each $28\times28$ MNIST digit to its middle $14\times 14$ portion.
We also form a $14\times 14$ pixelated version of each MNIST digit by averaging over $2\times 2$ blocks. 
For example, \textbf{(left)} depicts a 4 from the MNIST test set, while \textbf{(middle left)} depicts both cropped and pixelated versions of the same 4.
We run MCA with $k=19$ to identify a common domain.
We provide two illustrations of the information captured in the common domain.
\textbf{(middle right)}
For an image in domain $i\in\{1,2\}$, we apply the MCA-learned affine-linear map $g_i$ to send the image to the common domain, and then apply the pseudoinverse of $g_i$ to return the image back to domain $i$.
\textbf{(right)}
For an image in domain $i\in\{1,2\}$, we apply the MCA-learned affine-linear map $g_i$ to send the image to the common domain, and then apply the pseudoinverse of $g_{i'}$ to send the image to the other domain $i':=3-i$.
The fact that these projections look so similar illustrates that MCA identified well-matched components.
}
\label{fig.affine-linear}}
\end{figure}

\subsection{Transfer learning from computer fonts to MNIST}

For this experiment, we attempted transfer learning from the computer font (CF) digits provided in~\cite{Bernhardsson:online} to MNIST digits.
While the MNIST digits are $28\times 28$, the CF digits are $64\times 64$.
In order to put both into a common domain, we resized both datasets to be $16\times 16$; see Figure~\ref{fig.mnist-computer} for an illustration.
Interestingly, resizing MNIST in this way makes BL1 succeed with even modest values of $n$.
In order to make MCA competitive, we decided to focus on binary classification tasks, specifically, classifying 2 vs.\ 5, 0 vs.\ 1, and 4 vs.\ 9.
To identify a matching between CF and MNIST digits, we looked for $r=100$ CF digits that were closest to each of the $n$ MNIST digits in the Euclidean distance.
(For runtime considerations, we first selected 5,000 out of the 56,443 computer fonts that tended to be close to MNIST digits, and then limited our search to digits in these fonts.)
Since we used the Euclidean distance for matching, it comes as no surprise that BL2 outperforms MCA.
While Table~\ref{table} details the $n=10$ case, Figure~\ref{fig.mnist-computer} illustrates performance for each $n\in[10:10:150]$.
Surprisingly, the performance of MCA drops for larger values of $n$.
We discuss this further in Section~4.

\begin{figure}[t]
\centering
\includegraphics[width=0.98\textwidth]{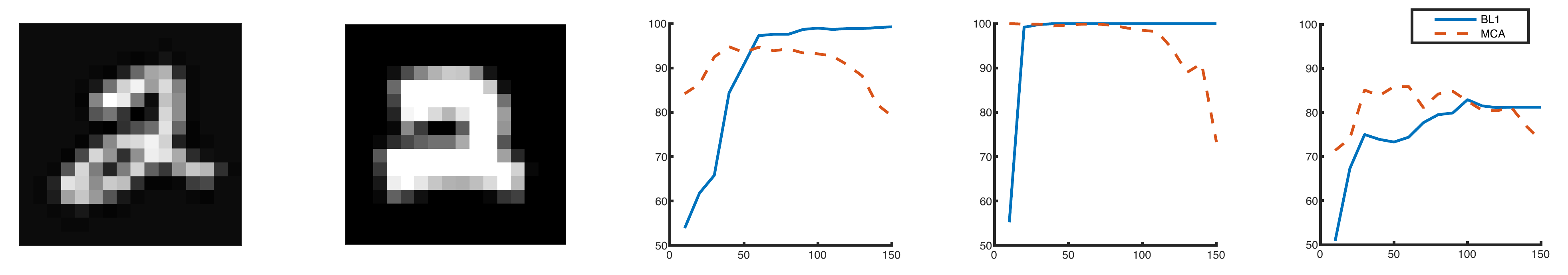}
\hspace{-3.88in}\rotatebox{90}{\tiny{~~\quad\quad test accuracy}}\hspace{3.73in}\tiny{$n$}
\caption{\footnotesize{Transfer learning from computer font digits~\cite{Bernhardsson:online} to MNIST digits.
We train binary classifiers for 2 vs.\ 5, 0 vs.\ 1, and 4 vs.\ 9.
In each setting, select $n\in[10:10:150]$, and draw $n$ MNIST digits at random.
For each of these digits, find the $r=100$ closest computer font digits in the Euclidean distance.
An example of a match is depicted in \textbf{(left)} and \textbf{(middle left)}.
As a baseline, we train a k-NN classifier on the MNIST portion of the matching set.
We also run MCA on the matching set with $k=5$, and then train a k-NN classifier on the common domain.
The accuracy of these classifiers on the test set is depicted in \textbf{(middle)} for 2 vs.\ 5, in \textbf{(middle right)} for 0 vs.\ 1, and in \textbf{(right)} for 4 vs.\ 9.
}
\label{fig.mnist-computer}}
\end{figure}

\subsection{Transfer learning with the SAMPLE dataset}

Finally, we consider transfer learning with the Synthetic and Measured Paired and Labeled (SAMPLE) database of computer-simulated and real-world SAR images~\cite{LewisEtal:19}. 
The publicly-available SAMPLE database consists of 1366 paired images of 10 different vehicles, each pair consisting of a real-world SAR image and a corresponding computer-simulated SAR image; see Figure~\ref{fig.sar} for an illustration. 

In this experiment, the training domain corresponds to simulated data, and the testing domain corresponds to real-world data.
The training set consists of 80\% of the simulated set of SAMPLE images, $n=100$ of which are matched with corresponding real-world data.
The test set consists of the real-world data corresponding to the withheld 20\% of simulated training set.
In this case, MCA substantially out-performs both BL1 and BL2; see Figure~\ref{fig.sar} for a depiction of the normalized confusion matrices in these cases.
We note that BL2 is similar to the SAR classification challenge problem outlined in~\cite{LewisEtal:19} and~\cite{scarnati2019deep}, where a small convolutional neural network (CNN) achieved 24\% accuracy, and a densely connected CNN achieved 55\% accuracy.
Impressively, by mapping to the common domain identified by MCA, we can simply use a k-NN classifier and increase performance to 87\%. 

\begin{figure}[t]
\centering
\includegraphics[width=\textwidth]{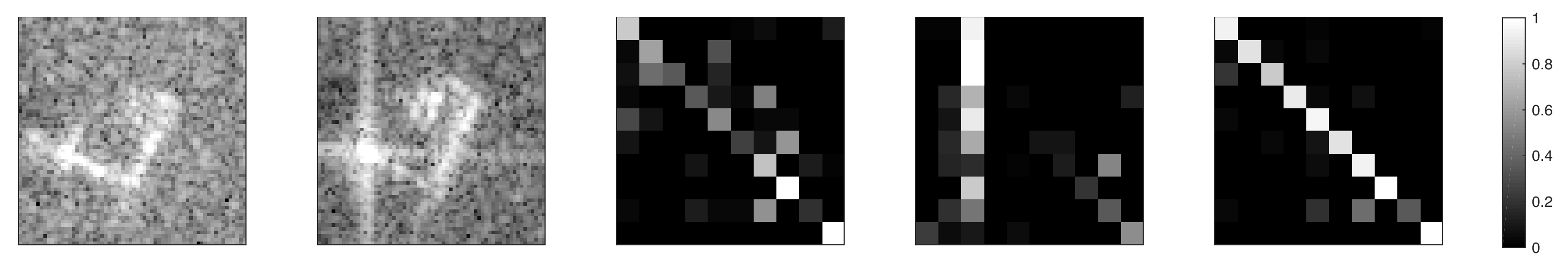}
\caption{\footnotesize{Transfer learning with the SAMPLE database of computer-simulated and real-world SAR images~\cite{LewisEtal:19}.
The SAMPLE database consists of 1366 paired images of 10 different vehicles, each pair consisting of a real-world SAR image and a corresponding computer-simulated SAR image.
For example, \textbf{(left)} is a real-world SAR image of an M548 tracked cargo carrier, while \textbf{(middle left)} is a corresponding computer-simulated SAR image that was developed with the help of a CAD model of the M548.
Our goal is to use 80\% (1092) of the computer-simulated images, 100 of which are paired with corresponding real-world images, to train a classifier that performs well on a test set comprised of the other 20\% (274) of real-world images.
\textbf{(middle)}
For a first baseline, we train a k-NN classifier on the 100 real-world images.
We depict the resulting normalized confusion matrix over the test set.
For this baseline, 62\% of the test set is classified correctly.
\textbf{(middle right)}
For a second baseline, we train a k-NN classifier on the 1092 computer-simulated images.
In this case, the classifier labels most images as the third vehicle type, namely, the BTR-70 armored personnel carrier.
Only 20\% of the test set is classified correctly.
\textbf{(right)}
Finally, we run matching component analysis (Algorithm~\ref{alg.mca}) with $k=99$ on the 100 paired images to identify a common domain, and then we train a k-NN classifier on the 1092 computer-simulated images in this common domain.
For this alternative, 87\% of the test set is classified correctly in the common domain.}
\label{fig.sar}}
\end{figure}

\section{Discussion}

This paper introduced matching component analysis (MCA, Algorithm~\ref{alg.mca}) as a method for identifying features in data that are appropriate for transfer learning.
In this section, we reflect on our observations and identify various opportunities for future work.

The theory developed in this paper concerned the sample complexity of MCA.
The fundamental question to answer is
\begin{center}
\textit{How large of a matching set is required to perform high-accuracy transfer learning?}
\end{center}
In order to isolate the performance of MCA, our theory does not rely on the choice of the classifier, and because of this, our sample complexity results rely on different proxies for success.
Overall, a different approach is needed to answer the above question.

Like many algorithms in machine learning, MCA requires the user to select a parameter, namely, $k$.
We currently do not have a rule of thumb for selecting this parameter.
Also, one should expect that a larger matching set will only help with transfer learning, but some of our experiments seem to suggest that MCA behaves \textit{worse} given more matches (see Figure~\ref{fig.mnist-computer}, for example).
While we do not understand this behavior, one can get around this by partitioning the matching set into batches, training a weak classifier on each batch, and then boosting.
The drop in performance might reflect the fact that MCA is oblivious to the data labels, suggesting a label-aware alternative (cf.\ PCA vs.\ SqueezeFit~\cite{McWhirterMV:18}).
The performance drop might also reflect our choice of affine linear maps and Euclidean distances, suggesting alternatives involving non-linear maps and other distances.

As one would expect, transfer learning is more difficult when the matching set is poorly matched.
Indeed, we observed this when transfer learning from MNIST to MNIST using two different matching techniques.
In practice, it is expensive to find a good matching set.
For example, for the SAMPLE dataset~\cite{LewisEtal:19}, it took two years of technical expertise to generate accurate computer-simulated matches.
In general, one might attempt to automate the matching process with an algorithm such as GHMatch~\cite{VillarBBW:16}, but we find that runtimes are slow for even moderately large datasets; e.g., it takes several minutes to match datasets with more than 50 points.
Overall, finding a matching set appears to be a bottleneck, akin to finding labels for a training set.
As an alternative, it would be interesting to instead develop theory that allows for transfer learning given non-matched data in both domains without having to first match the data.

\section{Proof of Theorem~\ref{thm.main result 1}}
\label{sec.proof of first main result}

It is convenient to define the diagonal operator
\[
D:=\left[\begin{array}{cc}I_{d_1}&0\\0&-I_{d_2}\end{array}\right]
\]
so that our objective function takes the form
\[
f_X(A)
=f_X(A_1,A_2)
:=\mathbb{E}\|A_1(X_1-\mathbb{E}X_1)-A_2(X_2-\mathbb{E}X_2)\|_2^2
=\mathbb{E}\|AD(X-\mathbb{E}X)\|_2^2.
\]
In what follows, we let $\|\cdot\|_V$ denote the norm on $V$ defined by
\[
\|(A_1,A_2)\|_V:=\max\{\|A_1\|_{2\to2},\|A_2\|_{2\to2}\}.
\]
This determines a Hausdorff distance $\operatorname{dist}$ between nonempty subsets of $V$.
Throughout, we denote $T_\alpha:=\{A\in V:\|A\|_V\leq \alpha\}$.
Our approach is summarized in the following lemma:

\begin{lemma}
\label{lem.key lemma}
Let $X,Y\in \mathbb{R}^{d_1}\times\mathbb{R}^{d_2}$ be random vectors such that
\begin{itemize}
\item[(i)]
$\operatorname{dist}(S_X,S_Y)\leq\epsilon_1$,
\item[(ii)]
$f_X,f_Y\colon(S_X\cup S_Y,\|\cdot\|_V)\to\mathbb{R}$ are both $L$-Lipschitz, and
\item[(iii)]
$|f_X(A)-f_Y(A)|\leq \epsilon_2$ for every $A\in S_X\cup S_Y$.
\end{itemize}
Then $\displaystyle\Big|\min_{A\in S_X}f_X(A)-\min_{A\in S_Y}f_Y(A)\Big|\leq L\epsilon_1+\epsilon_2$.
\end{lemma}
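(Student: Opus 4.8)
The plan is to exploit the built-in symmetry of the claim. The hypotheses (i)--(iii) are invariant under swapping the roles of $X$ and $Y$, so it suffices to prove the one-sided estimate $\min_{A\in S_X}f_X(A)\leq \min_{A\in S_Y}f_Y(A)+L\epsilon_1+\epsilon_2$; applying the same argument with $X$ and $Y$ interchanged then gives the reverse inequality, and the two combine into the stated absolute-value bound. Thus the entire content reduces to transporting a good point of $(S_Y,f_Y)$ into $(S_X,f_X)$ while controlling the damage incurred along the way.

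To prove the one-sided bound, I would first choose a minimizer $A^\star\in S_Y$ of $f_Y$, so that $f_Y(A^\star)=\min_{A\in S_Y}f_Y(A)$. Hypothesis (i), namely $\operatorname{dist}(S_X,S_Y)\leq\epsilon_1$ in the Hausdorff metric induced by $\|\cdot\|_V$, then furnishes a nearby feasible point $A'\in S_X$ with $\|A'-A^\star\|_V\leq\epsilon_1$. Since both $A'$ and $A^\star$ lie in $S_X\cup S_Y$, hypotheses (ii) and (iii) apply at these points, and I would chain the estimates
\[
\min_{A\in S_X}f_X(A)\leq f_X(A')\leq f_X(A^\star)+L\epsilon_1\leq f_Y(A^\star)+\epsilon_2+L\epsilon_1=\min_{A\in S_Y}f_Y(A)+L\epsilon_1+\epsilon_2,
\]
where the first step uses $A'\in S_X$, the second uses the $L$-Lipschitz property of $f_X$ together with $\|A'-A^\star\|_V\leq\epsilon_1$, and the third uses the uniform pointwise bound $|f_X(A^\star)-f_Y(A^\star)|\leq\epsilon_2$ from (iii).

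I do not anticipate a genuine obstacle here: once the transported point $A'$ is produced, the argument is a three-line triangle-inequality chain. The only point deserving care is whether the minimizer $A^\star$ is actually attained and whether the Hausdorff bound yields an \emph{exact} $\epsilon_1$-close point; if $S_Y$ fails to be compact one should instead take an $\eta$-approximate minimizer and a point within $\epsilon_1+\eta$, then let $\eta\to0$, which leaves the final constant unchanged. In the intended application this subtlety does not even arise, since the constraint $A_i\Sigma_iA_i^\top=I_k$ combined with $\lambda_{\min}(\Sigma_i)\geq\sigma^2$ forces $\|A_i\|_{2\to2}\leq 1/\sigma$, so both feasible sets are closed subsets of the compact ball $T_{1/\sigma}$ and genuine minimizers exist, validating the clean version of the argument as written.
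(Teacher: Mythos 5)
Your proposal is correct and follows essentially the same route as the paper's proof: your symmetry reduction is exactly the paper's ``without loss of generality'' step, and your three-step chain (transport the minimizer of $f_Y$ into $S_X$ via the Hausdorff bound, apply the Lipschitz estimate, then the pointwise bound) is the paper's argument verbatim. Your closing remark on attainment of minimizers and the $\eta$-approximate fallback is a careful touch the paper glosses over, but it does not change the substance.
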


\begin{proof}
Without loss of generality, it holds that $\min_{A\in S_X}f_X(A)\geq\min_{A\in S_Y}f_Y(A)$.
Let $A^\star$ denote an optimizer for $f_Y$.
By (i), there exists $B\in S_X$ such that $\|B-A^\star\|_V\leq \epsilon_1$, and then by (ii), it holds that $f_X(B)\leq f_X(A^\star)+L\epsilon_1$.
As such,
\[
\Big|\min_{A\in S_X}f_X(A)-\min_{A\in S_Y}f_Y(A)\Big|
\leq f_X(B)-f_Y(A^\star)
\leq L\epsilon_1+f_X(A^\star)-f_Y(A^\star)
\leq L\epsilon_1+\epsilon_2,
\]
where the last step applies (iii).
\end{proof}

As such, it suffices to show that $\hat{X}$ and $X$ satisfy Lemma~\ref{lem.key lemma}(i)--(iii).
In order to verify Lemma~\ref{lem.key lemma}(i), it is helpful to have a bound on the members of $S_X$:

\begin{lemma}
\label{lem.SX is bounded}
Suppose $\Sigma\succ0$.
If $A\Sigma A^\top=I$, then $\|A\|_{2\to2}^2\leq\lambda_{\operatorname{min}}(\Sigma)^{-1}$.
\end{lemma}

\begin{proof}
First, we observe that
\[
1
=\|I\|_{2\to2}
=\|A\Sigma A^\top\|_{2\to2}
=\|\Sigma^{1/2}A^\top\|_{2\to2}^2.
\]
Next, select a unit vector $x$ such that $\|A^\top x\|_2=\|A\|_{2\to2}$.
Then
\[
\|\Sigma^{1/2}A^\top\|_{2\to2}
\geq\|\Sigma^{1/2}A^\top x\|_2
\geq\lambda_{\operatorname{min}}(\Sigma^{1/2})\cdot\|A^\top x\|_2
=\lambda_{\operatorname{min}}(\Sigma^{1/2})\cdot\|A\|_{2\to2}.
\]
The result then follows by combining and rearranging the above estimates.
\end{proof}

\begin{lemma}
\label{lem.bound haus dist}
Suppose $\Sigma_{X_i},\Sigma_{Y_i}\succ0$ for both $i\in\{1,2\}$.
Then
\[
\operatorname{dist}(S_X,S_Y)^2
\leq\max_{i\in\{1,2\}}\frac{\|\Sigma_{X_i}-\Sigma_{Y_i}\|_{2\to2}}{\lambda_{\operatorname{min}}(\Sigma_{X_i})\cdot\lambda_{\operatorname{min}}(\Sigma_{Y_i})}.
\]
\end{lemma}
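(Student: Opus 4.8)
The plan is to produce, for each feasible $A=(A_1,A_2)\in S_X$, an explicit nearby point of $S_Y$ with a controllable distance, and then to exploit the symmetry of the claimed bound under $X\leftrightarrow Y$ to handle both one-sided Hausdorff deviations at once. Since $S_X=\{(A_1,A_2):A_i\Sigma_{X_i}A_i^\top=I\}$ factors as a product over $i\in\{1,2\}$ and $\|\cdot\|_V$ is the maximum of the two block operator norms, it suffices to match each block separately and then take the larger of the two resulting distances.

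For the matching map, observe that if $A_i\Sigma_{X_i}A_i^\top=I$, then setting $B_i:=A_i\Sigma_{X_i}^{1/2}\Sigma_{Y_i}^{-1/2}$ yields $B_i\Sigma_{Y_i}B_i^\top=A_i\Sigma_{X_i}A_i^\top=I$, so that $(B_1,B_2)\in S_Y$; the square roots are well defined because $\Sigma_{X_i},\Sigma_{Y_i}\succ0$. To estimate $\|A_i-B_i\|_{2\to2}$, I would factor $A_i-B_i=A_i(\Sigma_{Y_i}^{1/2}-\Sigma_{X_i}^{1/2})\Sigma_{Y_i}^{-1/2}$ and apply submultiplicativity. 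The outer factor contributes $\|\Sigma_{Y_i}^{-1/2}\|_{2\to2}=\lambda_{\min}(\Sigma_{Y_i})^{-1/2}$, while $\|A_i\|_{2\to2}\le\lambda_{\min}(\Sigma_{X_i})^{-1/2}$ by Lemma~\ref{lem.SX is bounded}. This gives
\[
\|A_i-B_i\|_{2\to2}^2 \le \frac{\|\Sigma_{X_i}^{1/2}-\Sigma_{Y_i}^{1/2}\|_{2\to2}^2}{\lambda_{\min}(\Sigma_{X_i})\,\lambda_{\min}(\Sigma_{Y_i})}.
\]

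The main obstacle, and the one genuinely nontrivial step, is to replace the numerator $\|\Sigma_{X_i}^{1/2}-\Sigma_{Y_i}^{1/2}\|_{2\to2}^2$ by $\|\Sigma_{X_i}-\Sigma_{Y_i}\|_{2\to2}$; that is, to establish the operator-norm H\"older bound $\|P^{1/2}-Q^{1/2}\|_{2\to2}^2\le\|P-Q\|_{2\to2}$ for positive semidefinite $P,Q$. I would prove this directly rather than cite it: writing $C:=P^{1/2}-Q^{1/2}$ (symmetric), one has the identity $P-Q=P^{1/2}C+CQ^{1/2}$. Testing against a unit eigenvector $x$ of $C$ whose eigenvalue $\lambda$ has maximal modulus gives $x^\top(P-Q)x=\lambda\,x^\top(P^{1/2}+Q^{1/2})x$; since $x^\top P^{1/2}x$ and $x^\top Q^{1/2}x$ are nonnegative with difference $\lambda$, their sum is at least $|\lambda|$, so for $\lambda>0$ we obtain $\|P-Q\|_{2\to2}\ge x^\top(P-Q)x\ge\lambda^2=\|C\|_{2\to2}^2$, and the case of a negative extremal eigenvalue follows by swapping $P$ and $Q$.

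Combining these steps bounds $\|A_i-B_i\|_{2\to2}^2$ by the $i$th term in the statement. Taking the maximum over $i$ bounds $\|A-B\|_V^2$, hence $\inf_{B'\in S_Y}\|A-B'\|_V^2$, uniformly over $A\in S_X$, which controls the first one-sided deviation. Because the final bound is symmetric in $(X,Y)$, the same construction with the roles of $X$ and $Y$ exchanged controls the second one-sided deviation, and the two together give the stated estimate for $\operatorname{dist}(S_X,S_Y)^2$.
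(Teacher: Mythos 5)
Your proposal is correct and follows essentially the same route as the paper's proof: the same transfer map $A_i\mapsto A_i\Sigma_{X_i}^{1/2}\Sigma_{Y_i}^{-1/2}$ sending $S_X$ into $S_Y$, the same factorization $A_i-B_i=A_i(\Sigma_{Y_i}^{1/2}-\Sigma_{X_i}^{1/2})\Sigma_{Y_i}^{-1/2}$ with submultiplicativity and Lemma~\ref{lem.SX is bounded}, and the same symmetry argument for the two one-sided Hausdorff deviations. The one place you diverge is the square-root perturbation bound $\|P^{1/2}-Q^{1/2}\|_{2\to2}^2\leq\|P-Q\|_{2\to2}$: the paper simply cites Theorem~X.1.1 in~\cite{Bhatia:13}, whereas you prove it from scratch via the identity $P-Q=P^{1/2}C+CQ^{1/2}$ with $C:=P^{1/2}-Q^{1/2}$ and a test against an extremal eigenvector of $C$; that argument is sound (including the reduction of the negative-eigenvalue case to the positive one by swapping $P$ and $Q$), and it makes the lemma self-contained at the cost of a few extra lines.
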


\begin{proof}
Define the function $g_{XY}\colon V\to V$ by
\[
g_{XY}(A_1,A_2)
:=(A_1\Sigma_{X_1}^{1/2}\Sigma_{Y_1}^{-1/2},A_2\Sigma_{X_2}^{1/2}\Sigma_{Y_2}^{-1/2}).
\]
Observe that $g_{XY}$ maps every point $(A_1,A_2)\in S_X$ to a point in $S_Y$:
\[
(A_i\Sigma_{X_i}^{1/2}\Sigma_{Y_i}^{-1/2})\Sigma_{Y_i}(A_i\Sigma_{X_i}^{1/2}\Sigma_{Y_i}^{-1/2})^\top
=A_i\Sigma_{X_i}A_i^\top
=I.
\]
Furthermore, for every $(A_1,A_2)\in S_X$, we may apply sub-multiplicativity, Lemma~\ref{lem.SX is bounded}, and then Theorem~X.1.1 in~\cite{Bhatia:13} to obtain
\begin{align*}
\|A_i\Sigma_{X_i}^{1/2}\Sigma_{Y_i}^{-1/2}-A_i\|_{2\to2}^2
&=\|A_i(\Sigma_{X_i}^{1/2}-\Sigma_{Y_i}^{1/2})\Sigma_{Y_i}^{-1/2}\|_{2\to2}^2\\
&\leq\|A_i\|_{2\to2}^2\cdot\|\Sigma_{X_i}^{1/2}-\Sigma_{Y_i}^{1/2}\|_{2\to2}^2\cdot\|\Sigma_{Y_i}^{-1/2}\|_{2\to2}^2\\
&\leq\frac{\|\Sigma_{X_i}^{1/2}-\Sigma_{Y_i}^{1/2}\|_{2\to2}^2}{\lambda_{\operatorname{min}}(\Sigma_{X_i})\cdot\lambda_{\operatorname{min}}(\Sigma_{Y_i})}
\leq\frac{\|\Sigma_{X_i}-\Sigma_{Y_i}\|_{2\to2}}{\lambda_{\operatorname{min}}(\Sigma_{X_i})\cdot\lambda_{\operatorname{min}}(\Sigma_{Y_i})}.
\end{align*}
Maximizing over $i\in\{1,2\}$ produces an upper bound on $\sup_{A\in S_X}\|g_{XY}(A)-A\|_V^2$.
By symmetry, the same bound holds for $\sup_{A\in S_Y}\|g_{YX}(A)-A\|_V^2$, implying the result.
\end{proof}

Overall, for Lemma~\ref{lem.key lemma}(i), it suffices to have spectral control over the covariance.
In the special case where $Y=\hat{X}$, we will accomplish this with the help of Matrix Hoeffding~\cite{MackeyEtal:12}.
Before doing so, we consider Lemma~\ref{lem.key lemma}(ii):

\begin{lemma}
\label{lem.v norm bound}
For every $A\in V$, it holds that $\|A\|_{2\to2}\leq \sqrt{2}\cdot\|A\|_V$.
\end{lemma}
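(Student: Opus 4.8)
The plan is to first unpack what $\|A\|_{2\to2}$ even means for $A=(A_1,A_2)\in V$. Reading off the objective $f_X(A)=\mathbb{E}\|AD(X-\mathbb{E}X)\|_2^2$, the tuple $A$ acts on a stacked vector $(x_1,x_2)\in\mathbb{R}^{d_1}\times\mathbb{R}^{d_2}$ as the block row matrix $[A_1\mid A_2]$, i.e.\ $A(x_1,x_2)=A_1x_1+A_2x_2$. Thus $\|A\|_{2\to2}$ is the operator norm of this horizontal concatenation, and the claim is the standard fact that gluing two operators side by side inflates the operator norm by a factor of at most $\sqrt 2$ relative to the larger of the two.

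First I would fix a unit vector $v=(v_1,v_2)\in\mathbb{R}^{d_1}\times\mathbb{R}^{d_2}$, so that $\|v_1\|_2^2+\|v_2\|_2^2=1$. Applying the triangle inequality and then submultiplicativity gives
\[
\|Av\|_2
=\|A_1v_1+A_2v_2\|_2
\leq\|A_1\|_{2\to2}\|v_1\|_2+\|A_2\|_{2\to2}\|v_2\|_2
\leq\|A\|_V\cdot(\|v_1\|_2+\|v_2\|_2),
\]
where the last step bounds both operator norms by $\|A\|_V=\max\{\|A_1\|_{2\to2},\|A_2\|_{2\to2}\}$.

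Second I would control the factor $\|v_1\|_2+\|v_2\|_2$ using Cauchy--Schwarz (equivalently, the elementary inequality $a+b\leq\sqrt2\sqrt{a^2+b^2}$ for nonnegative reals $a,b$), which yields $\|v_1\|_2+\|v_2\|_2\leq\sqrt2$ since $v$ is a unit vector. Combining the two displays gives $\|Av\|_2\leq\sqrt2\cdot\|A\|_V$, and taking the supremum over unit vectors $v$ produces the stated bound.

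The only obstacle here is bookkeeping rather than depth: one must be explicit about identifying $A\in V$ with the block row operator $[A_1\mid A_2]$, since the statement writes $\|\cdot\|_{2\to2}$ on $V$ without restating the convention inherited from the form of $f_X$. Once that identification is pinned down, the argument is just two applications of elementary inequalities. I would also note in passing that the constant $\sqrt2$ is sharp, as it is attained when $A_1=A_2$ and the mass of $v$ is split evenly between the two blocks.
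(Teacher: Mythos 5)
Your proof is correct and matches the paper's argument in all essentials: both fix a unit vector $[x_1;x_2]$, apply the triangle inequality blockwise, and then invoke Cauchy--Schwarz, the only (immaterial) difference being that you factor out $\|A\|_V=\max_i\|A_i\|_{2\to2}$ before applying Cauchy--Schwarz to $\|x_1\|_2+\|x_2\|_2$, whereas the paper applies Cauchy--Schwarz to the pair $\bigl(\|A_1\|_{2\to2},\|A_2\|_{2\to2}\bigr)$, $\bigl(\|x_1\|_2,\|x_2\|_2\bigr)$ and then bounds by the max. Your added observations (the block-row identification $A\mapsto[A_1\mid A_2]$ and the sharpness of $\sqrt{2}$ when $A_1=A_2$) are also correct.
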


\begin{proof}
Select a unit vector $x=[x_1;x_2]$ such that $\|A\|_{2\to2}=\|Ax\|_2$.
Then the triangle and Cauchy--Schwarz inequalities together give
\begin{align*}
\|A\|_{2\to2}
=\|A_1x_1+A_2x_2\|_2
&\leq \|A_1\|_{2\to2}\|x_1\|_2+\|A_2\|_{2\to2}\|x_2\|_2\\
&\leq \Big(\|A_1\|_{2\to2}^2+\|A_2\|_{2\to2}^2\Big)^{1/2}\Big(\|x_1\|_2^2+\|x_2\|_2^2\Big)^{1/2}\\
&\leq \sqrt{2}\cdot\max_{i\in\{1,2\}}\|A_i\|_{2\to2}
=\sqrt{2}\cdot\|A\|_V.
\qedhere
\end{align*}
\end{proof}

\begin{lemma}
\label{lem.lipschitz bound}
Suppose $\|X-\mathbb{E}X\|_{2,\infty}\leq \beta$ almost surely.
Then $f_X\colon(T_\alpha,\|\cdot\|_V)\to\mathbb{R}$ is $8\alpha \beta^2$-Lipschitz.
\end{lemma}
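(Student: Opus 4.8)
The plan is to reduce $f_X$ to the expected squared norm of a single bounded random vector hit by the concatenated matrix $[A_1\mid A_2]$, and then estimate the resulting difference pointwise through a difference-of-squares factorization. First I would absorb the sign operator $D$ into the data: setting $W:=D(X-\mathbb{E}X)$, and noting that $D$ is an isometry (its diagonal entries are $\pm1$), we get $\|W\|_2=\|X-\mathbb{E}X\|_2\leq\beta$ almost surely. Writing $A=(A_1,A_2)$ as the block matrix $[A_1\mid A_2]$ — exactly the convention under which $\|\cdot\|_{2\to2}$ is defined on $V$ in Lemma~\ref{lem.v norm bound} — this gives the clean form $f_X(A)=\mathbb{E}\|AW\|_2^2$.

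Next, for a fixed realization $w$ of $W$ and any $A,B\in T_\alpha$, I would factor
\[
\|Aw\|_2^2-\|Bw\|_2^2=\bigl(\|Aw\|_2-\|Bw\|_2\bigr)\bigl(\|Aw\|_2+\|Bw\|_2\bigr)
\]
and control each factor separately. The reverse triangle inequality bounds the first factor by $\|(A-B)w\|_2\leq\|A-B\|_{2\to2}\,\|w\|_2$, while the triangle inequality bounds the second by $\bigl(\|A\|_{2\to2}+\|B\|_{2\to2}\bigr)\|w\|_2$. I would then convert every operator norm into the $V$-norm via Lemma~\ref{lem.v norm bound}: since $A,B\in T_\alpha$ we have $\|A\|_{2\to2},\|B\|_{2\to2}\leq\sqrt2\,\alpha$, and, because $A-B\in V$, also $\|A-B\|_{2\to2}\leq\sqrt2\,\|A-B\|_V$. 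Multiplying these estimates yields the pointwise bound
\[
\bigl|\,\|Aw\|_2^2-\|Bw\|_2^2\,\bigr|\leq 4\alpha\,\|A-B\|_V\,\|w\|_2^2,
\]
and taking expectations together with $\|W\|_2\leq\beta$ almost surely gives $|f_X(A)-f_X(B)|\leq 4\alpha\beta^2\,\|A-B\|_V\leq 8\alpha\beta^2\,\|A-B\|_V$, which is the claimed Lipschitz estimate (in fact with a factor of $2$ to spare).

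This is a direct computation rather than a result with a genuine obstruction, so I do not expect a hard step; the only place to stay careful is the bookkeeping between the two norms on $V$ — keeping in mind that $\|\cdot\|_{2\to2}$ refers to the concatenated matrix and invoking Lemma~\ref{lem.v norm bound} in two distinct roles: once for the individual feasible points $A,B$ (to cash in the $T_\alpha$ constraint in the additive factor) and once for their difference $A-B$ (to bound the multiplicative factor).
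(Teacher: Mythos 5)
Your argument is structurally the same as the paper's --- the same difference-of-squares factorization, the same triangle-inequality bounds on each factor, and the same two invocations of Lemma~\ref{lem.v norm bound} --- but it contains one genuine error: the claim that $\|W\|_2=\|X-\mathbb{E}X\|_2\leq\beta$ almost surely. The hypothesis bounds the $(2,\infty)$-norm, which in this paper is the maximum of the \emph{blockwise} Euclidean norms, $\|X-\mathbb{E}X\|_{2,\infty}=\max\{\|X_1-\mathbb{E}X_1\|_2,\|X_2-\mathbb{E}X_2\|_2\}$, so the hypothesis only yields $\|X-\mathbb{E}X\|_2\leq\sqrt{2}\,\beta$ for the concatenated vector. (This is exactly how the paper uses the assumption elsewhere: the proof of Lemma~\ref{lem.pointwise bound} writes $\|Z_j\|_2\leq\sqrt{2}\cdot\beta$ for the full vector, while the paper's proof of the present lemma extracts only the blockwise bounds $\|Z_1\|_2,\|Z_2\|_2\leq\beta$.)

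The error happens to be exactly absorbed by the constant. Your pointwise bound $\bigl|\,\|Aw\|_2^2-\|Bw\|_2^2\,\bigr|\leq 4\alpha\,\|A-B\|_V\,\|w\|_2^2$ is correct, and substituting the corrected bound $\|W\|_2^2\leq 2\beta^2$ gives $|f_X(A)-f_X(B)|\leq 8\alpha\beta^2\,\|A-B\|_V$ --- precisely the claimed Lipschitz constant, with nothing to spare. So your closing remark about having ``a factor of $2$ to spare'' is false, but the lemma itself is recovered verbatim once the norm is read correctly. This also matches the paper's accounting: there the sum factor is bounded by $4\alpha\beta$ via $\|A_i\|_{2\to2}\leq\alpha$ and $\|Z_i\|_2\leq\beta$ blockwise, and the difference factor by $2\beta\,\|A-B\|_V$ via $\|Z\|_2\leq\sqrt{2}\,\beta$, producing the same product $8\alpha\beta^2$.
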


\begin{proof}
Put $Z:=X-\mathbb{E}X$ so that $f_X(A)=\mathbb{E}\|ADZ\|_2^2$, and select any $A,B\in T_\alpha$.
Then
\begin{align*}
|f_X(A)-f_X(B)|
&=\big|\mathbb{E}\|ADZ\|_2^2-\mathbb{E}\|BDZ\|_2^2\big|\\
&\leq\mathbb{E}\big|\|ADZ\|_2^2-\|BDZ\|_2^2\big|\\
&=\mathbb{E}\Big[\big(\|ADZ\|_2+\|BDZ\|_2\big)\cdot\big|\|ADZ\|_2-\|BDZ\|_2\big|\Big].
\end{align*}
To proceed, we bound each of the factors in the right-hand side.
First,
\[
\|ADZ\|_2
=\|A_1Z_1-A_2Z_2\|_2
\leq\|A_1\|_{2\to2}\|Z_1\|_2+\|A_2\|_{2\to2}\|Z_2\|_2
\leq 2\alpha\beta
\]
almost surely.
Similarly, $\|BDZ\|_2\leq 2\alpha\beta$ almost surely.
Next,
\[
\big|\|ADZ\|_2-\|BDZ\|_2\big|
\leq\|ADZ-BDZ\|_2
\leq\|A-B\|_{2\to2}\cdot\|Z\|_2
\leq 2\beta\cdot\|A-B\|_V
\]
almost surely, where the last step follows from Lemma~\ref{lem.v norm bound}.
Combining these estimates then gives the result.
\end{proof}

Our approach for demonstrating Lemma~\ref{lem.key lemma}(iii) is an net-based argument that is specialized to the case where $Y=\hat{X}$.
Our choice of net is a modification of what is used to estimate the spectral norm of subgaussian matrices:

\begin{lemma}
\label{lem.net exists}
Fix $\alpha,\eta>0$.
There exists $N\subseteq T_{\alpha+\eta}$ such that
\begin{itemize}
\item[(i)]
for every $x\in T_\alpha$, there exists $y\in N$ such that $\|x-y\|_V\leq\eta$, and
\item[(ii)]
$|N|\leq(1+\frac{2\sqrt{2k}\alpha}{\eta})^{k(d_1+d_2)}$.
\end{itemize}
\end{lemma}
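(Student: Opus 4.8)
The plan is to reduce the covering problem for the nonstandard norm $\|\cdot\|_V$ to a routine volumetric estimate by comparing $\|\cdot\|_V$ against the Euclidean structure on $V$. Identify $V=\mathbb{R}^{k\times d_1}\times\mathbb{R}^{k\times d_2}$ with $\mathbb{R}^{k(d_1+d_2)}$ and equip it with the Frobenius norm $\|(A_1,A_2)\|_F:=(\|A_1\|_F^2+\|A_2\|_F^2)^{1/2}$, which is exactly the Euclidean norm under this identification. The first step is to record the two-sided comparison $\|A\|_V\le\|A\|_F\le\sqrt{2k}\,\|A\|_V$: the left inequality is immediate from $\|A_i\|_{2\to2}\le\|A_i\|_F$, while the right follows since each $A_i$ is $k\times d_i$ and hence has at most $k$ nonzero singular values, giving $\|A_i\|_F^2\le k\|A_i\|_{2\to2}^2$ and so $\|A\|_F^2\le 2k\|A\|_V^2$.

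Next, I would take $N$ to be a maximal subset of $T_\alpha$ that is $\eta$-separated in $\|\cdot\|_F$ (pairwise Frobenius distances at least $\eta$); such a set exists by a standard maximality argument and is automatically finite by the volume bound below. By maximality, every $x\in T_\alpha$ lies within Frobenius distance $\eta$ of some $y\in N$, and then $\|x-y\|_V\le\|x-y\|_F\le\eta$, which establishes the covering property (i). Moreover $N\subseteq T_\alpha\subseteq T_{\alpha+\eta}$, as required.

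For the cardinality bound (ii), I would run the usual packing argument in the Euclidean norm. Since the points of $N$ are $\eta$-separated in $\|\cdot\|_F$, the open Frobenius balls $\{B_F(y,\eta/2)\}_{y\in N}$ are pairwise disjoint. Because $\|A\|_F\le\sqrt{2k}\,\|A\|_V\le\sqrt{2k}\,\alpha$ for $A\in T_\alpha$, every center lies in the Frobenius ball of radius $\sqrt{2k}\,\alpha$, so the disjoint union of these half-radius balls is contained in the Frobenius ball $B_F(0,\sqrt{2k}\,\alpha+\eta/2)$. Comparing Lebesgue volumes in $\mathbb{R}^{k(d_1+d_2)}$ and using homogeneity of volume under scaling yields
\[
|N|\le\Big(\frac{\sqrt{2k}\,\alpha+\eta/2}{\eta/2}\Big)^{k(d_1+d_2)}=\Big(1+\frac{2\sqrt{2k}\,\alpha}{\eta}\Big)^{k(d_1+d_2)},
\]
which is exactly (ii).

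The only real subtlety is that the two directions of the norm comparison play opposite roles: $\|\cdot\|_V\le\|\cdot\|_F$ is what makes a Frobenius net automatically a $\|\cdot\|_V$ net, while the reverse bound $\|\cdot\|_F\le\sqrt{2k}\,\|\cdot\|_V$ is what confines $T_\alpha$ to a Frobenius ball small enough to feed the volume count. The factor $\sqrt{2k}$ in the stated bound is precisely the price of this mismatch, arising from comparing the Frobenius and operator norms across the two $k\times d_i$ blocks; everything else is the standard metric-entropy computation.
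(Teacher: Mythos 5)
Your proof is correct and takes essentially the same approach as the paper: both rest on the two-sided comparison $\|\cdot\|_V\le\|\cdot\|_F\le\sqrt{2k}\,\|\cdot\|_V$ and a volumetric covering/packing estimate in $\mathbb{R}^{k(d_1+d_2)}$. The only cosmetic difference is that you construct $N$ as a maximal $\eta$-separated subset of $T_\alpha$ (so $N\subseteq T_\alpha\subseteq T_{\alpha+\eta}$ automatically) and prove the volume bound inline, whereas the paper takes an $\eta$-net of the enclosing Frobenius ball of radius $\sqrt{2k}\,\alpha$, intersects it with $T_{\alpha+\eta}$, and cites Proposition~4.2.12 of~\cite{Vershynin:18} for the same cardinality bound.
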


\begin{proof}
We will construct $N$ by first identifying an $\eta$-net $N_\eta$ for the Frobenius ball $B$ of radius $\sqrt{2k}\alpha$, and then taking $N:=N_\eta\cap T_{\alpha+\eta}$.
Indeed, Lemma~\ref{lem.v norm bound} implies
\[
\|A\|_F
\leq\sqrt{k}\cdot\|A\|_{2\to2}
\leq\sqrt{2k}\cdot\|A\|_V,
\]
and so $T_\alpha\subseteq B$.
As such, for every $x\in T_\alpha\subseteq B$, there exists $y\in N_\eta$ such that
\[
\|x-y\|_V\leq\|x-y\|_{2\to2}\leq\|x-y\|_F\leq \eta.
\]
Furthermore, this choice of $y$ necessarily resides in $T_{\alpha+\eta}$:
\[
\|y\|_V
=\|x-x+y\|_V
\leq\|x\|_V+\|x-y\|_V
\leq \alpha+\eta.
\]
As such, $N=N_\eta\cap T_{\alpha+\eta}$ satisfies (i).
A standard volume comparison argument (see Proposition~4.2.12 in~\cite{Vershynin:18}, for example) gives that $N_\eta$ satisfies the bound in (ii), and we are done by observing that $|N|\leq|N_\eta|$.
\end{proof}

The remainder of our proof is specialized to the case where $Y=\hat{X}$, and throughout, we make use of the following extensions to Hoeffding's inequality:

\begin{proposition}[Matrix Hoeffding~\cite{MackeyEtal:12}]
Suppose $\{X_j\}_{j\in[n]}$ are independent copies of a random symmetric matrix $X\in\mathbb{R}^{d\times d}$ such that $\mathbb{E}X=0$ and $\|X\|_{2\to2}\leq b$ almost surely.
Then for every $t\geq0$, it holds that
\[
\mathbb{P}\bigg\{\bigg\|\frac{1}{n}\sum_{j\in[n]}X_j\bigg\|_{2\to2}\geq t\bigg\}\leq 2d\cdot e^{-nt^2/(2b^2)}.
\]
\end{proposition}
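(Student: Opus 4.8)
The plan is to follow the matrix Laplace transform method of Ahlswede--Winter and Tropp. First I would reduce the two-sided spectral-norm bound to a one-sided largest-eigenvalue bound: since $\|M\|_{2\to2}=\max\{\lambda_{\max}(M),\lambda_{\max}(-M)\}$ for symmetric $M$, a union bound over $M=\frac1n\sum_{j\in[n]}X_j$ and $-M$ costs only a factor of $2$, and by symmetry it suffices to control $\mathbb{P}\{\lambda_{\max}(\frac1n\sum_{j}X_j)\geq t\}$. Writing $S:=\sum_{j\in[n]}X_j$, the event $\lambda_{\max}(\frac1n S)\geq t$ is the event $\lambda_{\max}(S)\geq nt$, so I would work with the unnormalized sum and substitute $s=nt$ only at the very end.

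Next I would invoke the matrix Laplace transform bound: for every $\theta>0$,
\[
\mathbb{P}\{\lambda_{\max}(S)\geq s\}\leq e^{-\theta s}\cdot\mathbb{E}\,\tr\exp(\theta S).
\]
The crux is to control $\mathbb{E}\,\tr\exp(\theta\sum_j X_j)$, and for this I would use subadditivity of the matrix cumulant generating function, a consequence of Lieb's concavity theorem applied to $A\mapsto\tr\exp(H+\log A)$, which gives
\[
\mathbb{E}\,\tr\exp\Big(\theta\sum_{j\in[n]}X_j\Big)\leq\tr\exp\Big(\sum_{j\in[n]}\log\mathbb{E}\,e^{\theta X_j}\Big).
\]

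The remaining and most delicate ingredient is the matrix Hoeffding lemma: for a centered symmetric $X$ with $\|X\|_{2\to2}\leq b$, one has $\log\mathbb{E}\,e^{\theta X}\preceq\frac{\theta^2 b^2}{2}I$. I would prove this via the transfer rule (spectral mapping): since $\operatorname{spec}(X)\subseteq[-b,b]$ and the scalar chord bound $e^{\theta x}\leq\cosh(\theta b)+\frac{\sinh(\theta b)}{b}x$ holds on $[-b,b]$ by convexity, we get $e^{\theta X}\preceq\cosh(\theta b)I+\frac{\sinh(\theta b)}{b}X$; taking expectation annihilates the linear term by $\mathbb{E}X=0$, and the elementary inequality $\cosh(y)\leq e^{y^2/2}$ yields $\mathbb{E}\,e^{\theta X}\preceq e^{\theta^2 b^2/2}I$, whence the logarithm bound. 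Summing over the $n$ i.i.d.\ copies gives $\sum_j\log\mathbb{E}\,e^{\theta X_j}\preceq\frac{n\theta^2 b^2}{2}I$, and since $\tr\exp$ is monotone under the semidefinite order, $\tr\exp(\frac{n\theta^2 b^2}{2}I)=d\cdot e^{n\theta^2 b^2/2}$.

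Finally I would assemble $\mathbb{P}\{\lambda_{\max}(S)\geq s\}\leq d\cdot\exp(-\theta s+\frac{n\theta^2 b^2}{2})$ and optimize over $\theta$; the minimizer $\theta=s/(nb^2)$ produces the exponent $-s^2/(2nb^2)$. Substituting $s=nt$ and doubling for the two-sided bound gives $2d\cdot e^{-nt^2/(2b^2)}$, exactly as stated. The main obstacle is the pair of noncommutative inequalities — the subadditivity step, which genuinely relies on Lieb's theorem rather than merely on the scalar Golden--Thompson inequality, and the matrix Hoeffding lemma itself; everything else is the scalar Chernoff computation transcribed into operator language. An alternative route, presumably the one taken in the cited reference, replaces Lieb's theorem with Stein's method of exchangeable pairs, constructing a matrix Stein pair for $S$ and bounding its conditional variance by $b^2$.
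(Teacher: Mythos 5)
Your proof is correct, but there is nothing in the paper to compare it against: the paper imports this proposition as a known result, citing Mackey et al.~\cite{MackeyEtal:12}, and never proves it (the only Hoeffding-type argument the paper actually carries out is the dilation trick reducing Vector Hoeffding to this Matrix Hoeffding). Your derivation is the classical Laplace-transform route, and each step checks out: the factor-$2$ reduction to $\lambda_{\max}$ via a union bound over $\pm S$; the bound $\mathbb{P}\{\lambda_{\max}(S)\geq s\}\leq e^{-\theta s}\,\mathbb{E}\operatorname{tr}\exp(\theta S)$; subadditivity of the matrix cumulant generating function via Lieb's concavity theorem (and you are right that Golden--Thompson alone does not suffice for $n>2$ summands); and the chord bound $e^{\theta X}\preceq\cosh(\theta b)I+\tfrac{\sinh(\theta b)}{b}X$, which after taking expectations and using $\cosh(y)\leq e^{y^2/2}$ gives $\log\mathbb{E}e^{\theta X}\preceq\tfrac{\theta^2b^2}{2}I$. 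One point worth emphasizing is that your constant comes out matching the statement precisely because the hypothesis here is a uniform spectral-norm bound $\|X\|_{2\to2}\leq b$: the chord argument then yields the sharp per-summand variance proxy $b^2$, whereas Tropp's general matrix Hoeffding (with $X_j^2\preceq A_j^2$ for arbitrary $A_j$) loses a constant (the familiar $e^{-t^2/(8\sigma^2)}$). Your closing guess about the cited reference is also accurate: Mackey et al.\ obtain this inequality, with the stated constant, by Stein's method of exchangeable pairs rather than by Lieb's theorem, so your writeup is a legitimate alternative derivation of the result the paper merely quotes.
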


\begin{proposition}[Vector Hoeffding]
Suppose $\{X_j\}_{j\in[n]}$ are independent copies of a random vector $X\in\mathbb{R}^d$ such that $\|X\|_2\leq b$ almost surely.
Then for every $t\geq0$, it holds that
\[
\mathbb{P}\Bigg\{\bigg\|\frac{1}{n}\sum_{j\in[n]}X_j\bigg\|_2\geq t\Bigg\}
\leq 2(d+1)\cdot e^{-nt^2/(2b^2)}.
\]
\end{proposition}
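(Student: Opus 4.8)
The plan is to deduce Vector Hoeffding from the Matrix Hoeffding proposition above by lifting each vector to a symmetric matrix whose operator norm records its Euclidean norm. Concretely, I would introduce the self-adjoint dilation $\mathcal{D}\colon\mathbb{R}^d\to\mathbb{R}^{(d+1)\times(d+1)}$ given by
\[
\mathcal{D}(v):=\left[\begin{array}{cc}0&v\\v^\top&0\end{array}\right],
\]
which is linear in $v$ and produces a symmetric matrix. The one computation worth recording is that $\|\mathcal{D}(v)\|_{2\to2}=\|v\|_2$; this follows by squaring, since
\[
\mathcal{D}(v)^2=\left[\begin{array}{cc}vv^\top&0\\0&\|v\|_2^2\end{array}\right],
\]
whose top eigenvalue is $\|v\|_2^2$.

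Next I would set $X_j':=\mathcal{D}(X_j)$, so that $\{X_j'\}_{j\in[n]}$ are independent copies of the random symmetric matrix $X':=\mathcal{D}(X)\in\mathbb{R}^{(d+1)\times(d+1)}$. The boundedness hypothesis transfers for free: $\|X'\|_{2\to2}=\|X\|_2\le b$ almost surely. To invoke Matrix Hoeffding I also need $\mathbb{E}X'=0$; since $\mathcal{D}$ is linear, $\mathbb{E}X'=\mathcal{D}(\mathbb{E}X)$, so this holds exactly when $\mathbb{E}X=0$, which is the regime in which the proposition will be applied (the sums to which we apply it, such as $\hat\mu_i-\mathbb{E}X_i$, are centered). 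Applying Matrix Hoeffding in dimension $d+1$ then yields
\[
\mathbb{P}\bigg\{\bigg\|\frac1n\sum_{j\in[n]}X_j'\bigg\|_{2\to2}\ge t\bigg\}\le 2(d+1)\cdot e^{-nt^2/(2b^2)}.
\]

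Finally I would translate this back to vectors using linearity of $\mathcal{D}$ once more: $\frac1n\sum_{j\in[n]}X_j'=\mathcal{D}\big(\frac1n\sum_{j\in[n]}X_j\big)$, and hence $\|\frac1n\sum_{j\in[n]}X_j'\|_{2\to2}=\|\frac1n\sum_{j\in[n]}X_j\|_2$ by the norm identity above. Substituting this equality into the displayed bound gives exactly the claimed inequality. The argument is essentially routine, so I do not anticipate a real obstacle; the only two points requiring care are verifying the operator-norm identity for the dilation and recognizing that the zero-mean hypothesis of Matrix Hoeffding forces the (implicit) centering assumption $\mathbb{E}X=0$, without which the stated bound cannot hold for small $t$.
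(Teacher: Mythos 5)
Your proof is essentially identical to the paper's own: it uses the same self-adjoint dilation $v\mapsto\left[\begin{smallmatrix}0&v^\top\\v&0\end{smallmatrix}\right]$ (the paper's block arrangement is just a permutation of yours), the same squaring computation to get $\|M(v)\|_{2\to2}=\|v\|_2$, and the same appeal to Matrix Hoeffding in dimension $d+1$ together with linearity of the dilation. Your side remark that the stated proposition implicitly needs $\mathbb{E}X=0$ (which the hypothesis omits but every application in the paper satisfies) is a valid observation that the paper's proof silently glosses over as well.
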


\begin{proof}
Following Section~2.1.16 in~\cite{Tropp:15}, for each column vector $v\in\mathbb{R}^d$, we consider the symmetric matrix
\[
M(v):=\left[\begin{array}{cc}0&v^\top\\v&0\end{array}\right].
\]
Then since
\[
M(v)^2=\left[\begin{array}{cc}\|v\|_2^2&0^\top\\0&vv^\top\end{array}\right],
\]
it holds that $\|M(v)\|_{2\to2}^2=\|M(v)^2\|_{2\to2}=\|v\|_2^2$.
Linearity then gives
\[
\bigg\|\frac{1}{n}\sum_{j\in[n]}X_j\bigg\|_2
=\bigg\|M\bigg(\frac{1}{n}\sum_{j\in[n]}X_j\bigg)\bigg\|_{2\to2}
=\bigg\|\frac{1}{n}\sum_{j\in[n]}M(X_j)\bigg\|_{2\to2}.
\]
By assumption, $\|M(X)\|_{2\to2}=\|X\|_2\leq b$ almost surely, and so Matrix Hoeffding implies
\[
\mathbb{P}\Bigg\{\bigg\|\frac{1}{n}\sum_{j\in[n]}X_j\bigg\|_2\geq t\Bigg\}
=\mathbb{P}\Bigg\{\bigg\|\frac{1}{n}\sum_{j\in[n]}M(X_j)\bigg\|_{2\to2}\geq t\Bigg\}
\leq2(d+1)\cdot e^{-nt^2/(2b^2)}.
\qedhere
\]
\end{proof}

For the remainder of this section, we make the following assumptions without mention:
$X=[X_1;X_2]$ is a random vector in $\mathbb{R}^{d_1}\times\mathbb{R}^{d_2}$ with mean $\mu=[\mu_1;\mu_2]$, and $\hat{X}$ is a random vector with mean $\hat\mu=[\hat\mu_1;\hat\mu_2]$ that is distributed uniformly over independent realizations $\{X_j=[X_{1j};X_{2j}]\}_{j\in[n]}$ of $X$.
It will always be clear from context whether $X_1$ refers to the first component of $X$ or the first independent copy of $X$.
We first tackle Lemma~\ref{lem.key lemma}(i) with the help of Lemma~\ref{lem.bound haus dist}: 

\begin{lemma}
\label{lem.bound spectral distance}
Suppose $\|X-\mu\|_{2,\infty}\leq\beta$ almost surely.
Then for every $\delta\geq0$, it holds that
\[
\max_{i\in\{1,2\}}\|\Sigma_{\hat{X}_i}-\Sigma_{X_i}\|_{2\to2}\leq\delta
\quad
\text{w.p.}
\quad
\geq1-8(d_1+d_2)\cdot e^{-\frac{n}{2}\cdot f(\frac{\delta}{2\beta^2})},
\]
where $f(z):=\min(z,z^2)$.
\end{lemma}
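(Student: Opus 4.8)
The plan is to split the covariance deviation into a piece that a matrix concentration inequality controls directly and a lower-order mean-correction piece. Writing $Z_{ij}:=X_{ij}-\mu_i$ and $\bar Z_i:=\hat\mu_i-\mu_i=\frac1n\sum_{j\in[n]}Z_{ij}$, expanding the sample covariance about the true mean gives $\Sigma_{\hat X_i}=\frac1n\sum_{j\in[n]}Z_{ij}Z_{ij}^\top-\bar Z_i\bar Z_i^\top$, so that by the triangle inequality $\|\Sigma_{\hat X_i}-\Sigma_{X_i}\|_{2\to2}\leq\big\|\frac1n\sum_{j\in[n]}Z_{ij}Z_{ij}^\top-\mathbb{E}Z_iZ_i^\top\big\|_{2\to2}+\|\bar Z_i\|_2^2$. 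I would then arrange for each of the two summands on the right to fall below $\delta/2$, so that their sum is at most $\delta$.

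For the first summand, the matrices $M_j:=Z_{ij}Z_{ij}^\top-\mathbb{E}Z_iZ_i^\top$ are independent, centered, and symmetric. The hypothesis $\|X-\mu\|_{2,\infty}\leq\beta$ forces $\|Z_{ij}\|_2\leq\beta$ almost surely, whence both $Z_{ij}Z_{ij}^\top$ and $\mathbb{E}Z_iZ_i^\top$ are positive semidefinite with spectral norm at most $\beta^2$; a Rayleigh-quotient argument (each quadratic form $v^\top M_j v$ lies in $[-\beta^2,\beta^2]$) then gives $\|M_j\|_{2\to2}\leq\beta^2$. Applying Matrix Hoeffding with $b=\beta^2$ and $t=\delta/2$ bounds the probability that this summand exceeds $\delta/2$ by $2d_i\cdot e^{-n\delta^2/(8\beta^4)}$. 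For the second summand, the event $\|\bar Z_i\|_2^2\geq\delta/2$ is exactly $\|\bar Z_i\|_2\geq\sqrt{\delta/2}$, and since $\bar Z_i$ is an average of independent, mean-zero vectors of norm at most $\beta$, Vector Hoeffding with $b=\beta$ and $t=\sqrt{\delta/2}$ bounds this probability by $2(d_i+1)\cdot e^{-n\delta/(4\beta^2)}$.

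Finally I would union-bound over the four bad events (two summands for each $i\in\{1,2\}$). The observation that produces the stated rate $f(z)=\min(z,z^2)$ is that, writing $z:=\delta/(2\beta^2)$, the two exponents above equal precisely $\tfrac{n}{2}z^2$ and $\tfrac{n}{2}z$; since $f(z)\leq z^2$ and $f(z)\leq z$, both $e^{-n\delta^2/(8\beta^4)}$ and $e^{-n\delta/(4\beta^2)}$ are bounded by $e^{-\frac{n}{2}f(\delta/(2\beta^2))}$. Summing the four prefactors gives $2d_1+2d_2+2(d_1+1)+2(d_2+1)=4(d_1+d_2)+4\leq 8(d_1+d_2)$, which yields the claimed probability bound. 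The only genuine subtlety is the $\min$ in $f$: the fast sub-Gaussian (quadratic) rate is contributed by the covariance fluctuation, while the slower sub-exponential (linear) rate arises from squaring the mean deviation, and accommodating both tails with a single exponent forces the minimum. Everything else is a routine invocation of the two Hoeffding bounds already recorded, together with the boundedness coming from $\|X-\mu\|_{2,\infty}\leq\beta$.
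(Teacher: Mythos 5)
Your proposal is correct and follows essentially the same route as the paper's proof: the same expansion of $\Sigma_{\hat X_i}$ about the true mean, the same triangle-inequality split into a covariance-fluctuation term (handled by Matrix Hoeffding with $b=\beta^2$) and a squared mean-deviation term (handled by Vector Hoeffding), the same thresholds $\delta_1=\delta_2^2=\delta/2$, and the same union-bound arithmetic giving the prefactor $4(d_1+d_2)+4\leq 8(d_1+d_2)$ and the exponent $\tfrac{n}{2}\min(z,z^2)$ with $z=\delta/(2\beta^2)$. Your Rayleigh-quotient justification of $\|Z_{ij}Z_{ij}^\top-\Sigma_{X_i}\|_{2\to2}\leq\beta^2$ is just a rephrasing of the paper's observation that $\|A-B\|_{2\to2}\leq\max\{\|A\|_{2\to2},\|B\|_{2\to2}\}$ for positive semidefinite $A,B$.
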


\begin{proof}
Add zero and expand to obtain
\begin{align*}
\Sigma_{\hat{X}_i}&=\frac{1}{n}\sum_{j\in[n]}(X_{ij}-\hat\mu_i)(X_{ij}-\hat\mu_i)^\top\\
&=\frac{1}{n}\sum_{j\in[n]}\Big((X_{ij}-\mu_i)-(\hat\mu_i-\mu_i)\Big)\Big((X_{ij}-\mu_i)-(\hat\mu_i-\mu_i)\Big)^\top\\
&=\frac{1}{n}\sum_{j\in[n]}(X_{ij}-\mu_i)(X_{ij}-\mu_i)^\top-(\hat\mu_i-\mu_i)(\hat\mu_i-\mu_i)^\top.
\end{align*}
The triangle inequality then gives
\[
\|\Sigma_{\hat{X}_i}-\Sigma_{X_i}\|_{2\to2}
\leq\bigg\|\frac{1}{n}\sum_{j\in[n]}\Big((X_{ij}-\mu_i)(X_{ij}-\mu_i)^\top-\Sigma_{X_i}\Big)\bigg\|_{2\to2}+\bigg\|\frac{1}{n}\sum_{j\in[n]}(X_{ij}-\mu_i)\bigg\|_2^2.
\]
For the first term, note that $\|A-B\|_{2\to2}\leq\max\{\|A\|_{2\to2},\|B\|_{2\to2}\}$ when $A,B\succeq0$, and so
\[
\Big\|(X_{ij}-\mu_i)(X_{ij}-\mu_i)^\top-\Sigma_{X_i}\Big\|_{2\to2}
\leq \max\Big\{\|X_{ij}-\mu_i\|_2^2,\|\Sigma_{X_i}\|_{2\to2}\Big\}
\leq \beta^2
\]
almost surely.
Matrix Hoeffding then gives
\[
\bigg\|\frac{1}{n}\sum_{j\in[n]}\Big((X_{ij}-\mu_i)(X_{ij}-\mu_i)^\top-\Sigma_{X_i}\Big)\bigg\|_{2\to2}
\leq \delta_1
\quad
\text{w.p.}
\quad
\geq1-2d_i\cdot e^{-n\delta_1^2/(2\beta^4)}.
\]
Next, we bound the second term by Vector Hoeffding:
\[
\bigg\|\frac{1}{n}\sum_{j\in[n]}(X_{ij}-\mu_i)\bigg\|_2
\leq \delta_2
\quad
\text{w.p.}
\quad
\geq1-2(d_i+1)\cdot e^{-n\delta_2^2/(2\beta^2)}.
\]
The result follows by setting $\delta_1=\delta_2^2=\delta/2$ and applying the union bound.
\end{proof}

%

In our case, Lemma~\ref{lem.key lemma}(ii) is immediate from Lemma~\ref{lem.lipschitz bound}.
For Lemma~\ref{lem.key lemma}(iii), our net-based argument requires a pointwise estimate:

\begin{lemma}
\label{lem.pointwise bound}
Suppose $\|X-\mu\|_{2,\infty}\leq\beta$ almost surely, and fix $A\in T_\alpha$.
Then for every $\delta\geq0$, it holds that
\[
|f_{\hat{X}}(A)-f_X(A)|\leq\delta
\quad
\text{w.p.}
\quad
\geq1-4(d_1+d_2)\cdot e^{-\frac{n}{2}\cdot f(\frac{\delta}{8\alpha^2\beta^2})},
\]
where $f(z):=\min(z,z^2)$.
\end{lemma}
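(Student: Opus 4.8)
The plan is to reduce the claim to two scalar concentration estimates via a bias--variance decomposition, and then to observe that the two resulting terms live in complementary tail regimes whose envelope is exactly $f(z)=\min(z,z^2)$.

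First I would set $Z_j:=X_j-\mu$ (and write $Z:=X-\mu$ for the generic copy) and $\bar Z:=\frac1n\sum_{j\in[n]}Z_j=\hat\mu-\mu$, and abbreviate $u_j:=ADZ_j$. Using $\|A_i\|_{2\to2}\le\alpha$ together with $\|X-\mu\|_{2,\infty}\le\beta$, exactly as in the proof of Lemma~\ref{lem.lipschitz bound}, gives $\|u_j\|_2=\|A_1Z_{1j}-A_2Z_{2j}\|_2\le 2\alpha\beta$, hence $\|u_j\|_2^2\le 4\alpha^2\beta^2$ almost surely. Since $X_j-\hat\mu=Z_j-\bar Z$, the elementary identity $\frac1n\sum_j\|u_j-\bar u\|_2^2=\frac1n\sum_j\|u_j\|_2^2-\|\bar u\|_2^2$ with $\bar u=AD\bar Z$ yields
\[
f_{\hat X}(A)=\frac1n\sum_{j\in[n]}\|u_j\|_2^2-\|AD\bar Z\|_2^2.
\]
Because $f_X(A)=\mathbb{E}\|ADZ\|_2^2$, subtracting gives $f_{\hat X}(A)-f_X(A)=T_1-T_2$, where $T_1:=\frac1n\sum_j\|u_j\|_2^2-\mathbb{E}\|ADZ\|_2^2$ and $T_2:=\|AD\bar Z\|_2^2\ge0$. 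Thus it suffices to force $|T_1|\le\delta/2$ and $T_2\le\delta/2$.

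For $T_1$, the centered summands $\|u_j\|_2^2-\mathbb{E}\|ADZ\|_2^2$ are i.i.d.\ scalars bounded in magnitude by $b:=4\alpha^2\beta^2$, so Matrix Hoeffding applied in dimension $d=1$ gives $\mathbb{P}(|T_1|\ge\delta/2)\le 2e^{-n(\delta/2)^2/(2b^2)}$; writing $z:=\delta/(8\alpha^2\beta^2)$, this exponent is exactly $\frac n2 z^2$, the subgaussian regime. For $T_2$, I would instead control the mean fluctuation with Vector Hoeffding: since $\|Z_j\|_2\le\sqrt2\beta$, one has $\|\bar Z\|_2\le\delta_2$ with probability $\ge 1-2(d_1+d_2+1)e^{-n\delta_2^2/(4\beta^2)}$, and because $D$ is orthogonal and $\|A\|_{2\to2}\le\sqrt2\,\|A\|_V\le\sqrt2\alpha$ by Lemma~\ref{lem.v norm bound}, we get $T_2\le 2\alpha^2\|\bar Z\|_2^2$. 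Choosing $\delta_2:=\sqrt\delta/(2\alpha)$ forces $T_2\le\delta/2$ on this event while aligning the exponent to exactly $\frac n2 z$, the subexponential regime. The key structural point is that squaring a norm converts the quadratic Hoeffding exponent of $\bar Z$ into a \emph{linear} dependence on $\delta$, which is precisely why $T_2$ contributes the $z$ regime rather than the $z^2$ regime.

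Finally I would combine the two estimates by a union bound. Since $4z^2\ge\min(z,z^2)$ and $z\ge\min(z,z^2)$ for all $z\ge0$, both tail bounds are dominated by $e^{-\frac n2 f(z)}$, so the total failure probability is at most $\big(2+2(d_1+d_2+1)\big)e^{-\frac n2 f(z)}=\big(2(d_1+d_2)+4\big)e^{-\frac n2 f(z)}\le 4(d_1+d_2)e^{-\frac n2 f(z)}$, using $d_1+d_2\ge2$. The main thing to get right is the alignment of constants so that the two genuinely different tail behaviors---subgaussian from $T_1$ and subexponential from $T_2$---glue into the single envelope $f(z)=\min(z,z^2)$; everything else is routine.
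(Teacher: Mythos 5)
Your proof is correct and follows essentially the same route as the paper's: the same bias--variance identity $f_{\hat X}(A)=\frac1n\sum_j\|ADZ_j\|_2^2-\|AD(\hat\mu-\mu)\|_2^2$, scalar Hoeffding with bound $4\alpha^2\beta^2$ for the empirical-mean term, Vector Hoeffding with bound $\sqrt2\beta$ for the bias term, and the identical threshold choices $\delta_1=\delta/2$, $\delta_2=\sqrt\delta/(2\alpha)$ followed by a union bound and the observation $d_1+d_2\ge2$. (Minor nit: you write $4z^2\ge\min(z,z^2)$ where the plain $z^2\ge\min(z,z^2)$ is all that is needed and what your exponents actually give.)
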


\begin{proof}
First, add zero and expand the square to get
\begin{align*}
f_{\hat{X}}(A)
&=\mathbb{E}\|AD(\hat{X}-\hat\mu)\|_2^2
=\mathbb{E}\|AD(\hat{X}-\mu)-AD(\hat\mu-\mu)\|_2^2\\
&=\mathbb{E}\Big(\|AD(\hat{X}-\mu)\|_2^2-2\langle AD(\hat{X}-\mu),AD(\hat\mu-\mu)\rangle+\|AD(\hat\mu-\mu)\|_2^2\Big)\\
&=\mathbb{E}\|AD(\hat{X}-\mu)\|_2^2-\|AD(\hat\mu-\mu)\|_2^2.
\end{align*}
Next, put $Z_j:=X_j-\mu$.
Then the triangle inequality and Lemma~\ref{lem.v norm bound} together give
\begin{align*}
|f_{\hat{X}}(A)-f_X(A)|
&=\Big|\mathbb{E}\|AD(\hat{X}-\mu)\|_2^2-\|AD(\hat\mu-\mu)\|_2^2-\mathbb{E}\|AD(X-\mu)\|_2^2\Big|\\
&\leq\Big|\mathbb{E}\|AD(\hat{X}-\mu)\|_2^2-\mathbb{E}\|AD(X-\mu)\|_2^2\Big|+2\alpha^2\cdot\|\hat\mu-\mu\|_2^2\\
&=\bigg|\frac{1}{n}\sum_{j\in[n]}\Big(\|ADZ_j\|_2^2-\mathbb{E}\|ADZ\|_2^2\Big)\bigg|+2\alpha^2\cdot\bigg\|\frac{1}{n}\sum_{j\in[n]}Z_j\bigg\|_2^2.
\end{align*}
We will bound both terms above in a high-probability event by passing to (Vector) Hoeffding.
First, $0\leq\|ADZ_j\|_2^2\leq\|A\|_{2\to2}^2\|Z_j\|_2^2\leq 4\alpha^2\beta^2$ almost surely, and so 
\[
\Big|\|ADZ_j\|_2^2-\mathbb{E}\|ADZ\|_2^2\Big|
\leq4\alpha^2\beta^2
\]
almost surely.
As such, Hoeffding implies
\[
\bigg|\frac{1}{n}\sum_{j\in[n]}\Big(\|ADZ_j\|_2^2-\mathbb{E}\|ADZ\|_2^2\Big)\bigg|
\leq \delta_1
\quad
\text{w.p.}
\quad
\geq 1-2e^{-n\delta_1^2/(32\alpha^4\beta^4)}.
\]
Similarly, since $\|Z_j\|_2\leq \sqrt{2}\cdot\beta$ almost surely, Vector Hoeffding implies
\[
\bigg\|\frac{1}{n}\sum_{j\in[n]}Z_j\bigg\|_2
\leq \delta_2
\quad
\text{w.p.}
\quad
\geq 1-2(d_1+d_2+1)\cdot e^{-n\delta_2^2/(4\beta^2)}.
\]
The result then follows by setting $\delta_1=2\alpha^2\delta_2^2=\delta/2$ and applying the union bound.
\end{proof}

We are now ready to prove Theorem~\ref{thm.main result 1}.
What follows is a more explicit theorem statement.
(Note: We did not optimize the constants in this statement.)

\begin{theorem}
Suppose $\|X-\mu\|_{2,\infty}\leq\beta$ almost surely and $\min_{i\in\{1,2\}}\lambda_{\operatorname{min}}(\Sigma_{X_i})\geq\sigma^2>0$.
Fix any $\epsilon\in(0,2^5]$.
Then
\[
\displaystyle\Big|\min_{A\in S_{\hat{X}}}f_{\hat{X}}(A)-\min_{A\in S_{X}}f_X(A)\Big|\leq \epsilon\cdot\frac{\beta^2}{\sigma^2}
\]
in an event of probability $\geq 1-p$, provided
\[
n\geq\max\bigg\{
\tfrac{2^{15}}{\epsilon^2}\Big(k(d_1+d_2)\log(\tfrac{2^{22}k}{\epsilon^2})+\log(\tfrac{2}{p})\Big),
\tfrac{2^{25}}{\epsilon^4}(\tfrac{\beta}{\sigma})^4\Big(\log(2^3(d_1+d_2))+\log(\tfrac{2}{p})\Big)
\bigg\}.
\]
\end{theorem}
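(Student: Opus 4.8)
The plan is to invoke Lemma~\ref{lem.key lemma} with $Y=\hat{X}$, verifying its three hypotheses on a single high-probability event and then chasing constants so that the resulting bound $L\epsilon_1+\epsilon_2$ is at most $\epsilon\beta^2/\sigma^2$. Everything rests on two ``good events'': a \emph{covariance event}, on which the empirical covariances are spectrally close to the true ones, and a \emph{net event}, on which $f_{\hat X}$ and $f_X$ agree pointwise on a fine net. I would intersect these and absorb their failure probabilities into $p/2+p/2=p$.

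First I would build the covariance event. By Lemma~\ref{lem.bound spectral distance}, for a tolerance $\delta_{\mathrm{cov}}$ to be chosen, $\max_i\|\Sigma_{\hat X_i}-\Sigma_{X_i}\|_{2\to2}\le\delta_{\mathrm{cov}}$ off an event of probability $\le 8(d_1+d_2)e^{-\frac n2 f(\delta_{\mathrm{cov}}/(2\beta^2))}$. Choosing $\delta_{\mathrm{cov}}\le\sigma^2/2$, Weyl's inequality forces $\lambda_{\min}(\Sigma_{\hat X_i})\ge\sigma^2/2>0$, so $\Sigma_{\hat X_i}\succ0$ and Lemmas~\ref{lem.SX is bounded} and~\ref{lem.bound haus dist} apply to $\hat X$ as well as to $X$. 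Feeding the denominator bound $\lambda_{\min}(\Sigma_{X_i})\lambda_{\min}(\Sigma_{\hat X_i})\ge\sigma^4/2$ into Lemma~\ref{lem.bound haus dist} yields hypothesis (i) with $\epsilon_1=\sqrt{2\delta_{\mathrm{cov}}}/\sigma^2$. For hypothesis (ii), Lemma~\ref{lem.SX is bounded} places $S_X\cup S_{\hat X}$ inside $T_\alpha$ with $\alpha=\sqrt2/\sigma$; since each realization obeys $\|X_j-\hat\mu\|_2\le\|X_j-\mu\|_2+\|\hat\mu-\mu\|_2\le2\beta$, Lemma~\ref{lem.lipschitz bound} (applied to both $X$ and $\hat X$, the latter with constant $2\beta$) makes $f_X,f_{\hat X}$ Lipschitz on $T_{2\alpha}$ with a common constant $L=O(\alpha\beta^2)=O(\beta^2/\sigma)$.

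The main work is hypothesis (iii), via a net argument over $T_\alpha$. Fix a mesh $\eta\le\alpha$ and use Lemma~\ref{lem.net exists} to get $N\subseteq T_{\alpha+\eta}$ with $|N|\le(1+2\sqrt{2k}\alpha/\eta)^{k(d_1+d_2)}$ such that every $A\in T_\alpha$ is within $\|\cdot\|_V$-distance $\eta$ of $N$. Applying Lemma~\ref{lem.pointwise bound} at each net point (with radius $\alpha+\eta$) and a union bound gives $|f_{\hat X}(y)-f_X(y)|\le\delta_{\mathrm{pt}}$ for all $y\in N$ off an event of probability $\le|N|\cdot4(d_1+d_2)e^{-\frac n2 f(\delta_{\mathrm{pt}}/(8(\alpha+\eta)^2\beta^2))}$. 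On the intersection with the covariance event, the Lipschitz bound transfers this to all of $S_X\cup S_{\hat X}$: for $A\in T_\alpha$ and nearby $y\in N$,
\[
|f_{\hat X}(A)-f_X(A)|\le |f_{\hat X}(A)-f_{\hat X}(y)|+|f_{\hat X}(y)-f_X(y)|+|f_X(y)-f_X(A)|\le 2L\eta+\delta_{\mathrm{pt}},
\]
so hypothesis (iii) holds with $\epsilon_2=2L\eta+\delta_{\mathrm{pt}}$; choosing $\eta\le\delta_{\mathrm{pt}}/(2L)$ forces $\epsilon_2\le2\delta_{\mathrm{pt}}$.

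It remains to tune the parameters. Setting $\delta_{\mathrm{cov}}\asymp\epsilon^2\sigma^2$ gives $L\epsilon_1\le\tfrac\epsilon2\tfrac{\beta^2}{\sigma^2}$, and $\delta_{\mathrm{pt}}\asymp\epsilon\beta^2/\sigma^2$ gives $\epsilon_2\le\tfrac\epsilon2\tfrac{\beta^2}{\sigma^2}$, so Lemma~\ref{lem.key lemma} delivers exactly $\epsilon\beta^2/\sigma^2$. The hypothesis $\epsilon\le2^5$ together with $\sigma\le\beta$ (which follows from $\sigma^2\le\lambda_{\min}(\Sigma_{X_i})\le\tfrac1{d_i}\operatorname{tr}\Sigma_{X_i}\le\beta^2$) keeps both Hoeffding arguments $\delta_{\mathrm{cov}}/(2\beta^2)$ and $\delta_{\mathrm{pt}}/(8(\alpha+\eta)^2\beta^2)$ below $1$, so $f(z)=z^2$ in each exponent, and also guarantees $\delta_{\mathrm{cov}}\le\sigma^2/2$; this is the step to watch, since otherwise the $\min(z,z^2)$ in Lemmas~\ref{lem.bound spectral distance} and~\ref{lem.pointwise bound} would degrade the rate. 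Demanding each failure event have probability $\le p/2$ then produces two lower bounds on $n$: the covariance event needs $n\gtrsim\epsilon^{-4}(\beta/\sigma)^4(\log(d_1+d_2)+\log\tfrac1p)$, while the net event, through $\log|N|\asymp k(d_1+d_2)\log(\sqrt k/\epsilon)$, needs $n\gtrsim\epsilon^{-2}(k(d_1+d_2)\log\tfrac k{\epsilon^2}+\log\tfrac1p)$; the maximum is the stated bound. The most delicate bookkeeping is balancing $\eta$ against $\delta_{\mathrm{pt}}$ so the net cardinality enters only logarithmically while $2L\eta$ stays negligible, and this is where the bulk of the constant-chasing lives.
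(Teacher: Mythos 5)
Your proposal is correct and follows essentially the same route as the paper's own proof: the same reduction via Lemma~\ref{lem.key lemma} with $Y=\hat{X}$, the same covariance event built from Lemma~\ref{lem.bound spectral distance}, Weyl's inequality, and Lemma~\ref{lem.bound haus dist}, the same net argument combining Lemmas~\ref{lem.net exists} and~\ref{lem.pointwise bound} with a Lipschitz transfer from Lemma~\ref{lem.lipschitz bound}, and the same parameter scalings $\delta_{\mathrm{cov}}\asymp\epsilon^2\sigma^2$, $\delta_{\mathrm{pt}}\asymp\epsilon\beta^2/\sigma^2$, $\eta\asymp\epsilon/\sigma$ yielding the two stated lower bounds on $n$. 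Your observation that $f_{\hat{X}}$ should be handled with the almost-sure bound $\|\hat{X}-\hat\mu\|_{2,\infty}\leq 2\beta$ is in fact slightly more careful than the paper, which applies Lemma~\ref{lem.lipschitz bound} to $\hat{X}$ with constant $\beta$; this affects only absolute constants.
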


\begin{proof}
Let $N_{\alpha,\eta}$ denote the net described in Lemma~\ref{lem.net exists}, and let $\mathcal{E}_{\delta,\alpha,\eta,\gamma}$ denote the event
\[
\Big\{
~~
\max_{i\in\{1,2\}}\|\Sigma_{\hat{X}_i}-\Sigma_{X_i}\|_{2\to2}\leq\delta
\quad
\text{and}
\quad
\max_{A\in N_{\alpha,\eta}}|f_{\hat{X}}(A)-f_X(A)|+16(\alpha+\eta)\beta^2\eta\leq\gamma
~~
\Big\}.
\]
Let $\xi\in[0,1]$ be arbitrary (to be selected later), and put $\delta:=\xi^2\sigma^2/2$ and $\alpha:=2/\sigma$.
Then the first part of $\mathcal{E}_{\delta,\alpha,\eta,\gamma}$ together with Weyl's inequality gives
\[
\lambda_{\operatorname{min}}(\Sigma_{\hat{X}_i})
=\lambda_{\operatorname{min}}(\Sigma_{{X}_i}+\Sigma_{\hat{X}_i}-\Sigma_{{X}_i})
\geq\lambda_{\operatorname{min}}(\Sigma_{{X}_i})-\|\Sigma_{\hat{X}_i}-\Sigma_{{X}_i}\|_{2\to2}
\geq \sigma^2-\delta
\geq \frac{\sigma^2}{2}
\]
for each $i\in\{1,2\}$, where the last step uses the fact that $\xi\leq 1$.
Lemma~\ref{lem.bound haus dist} then gives
\begin{itemize}
\item[(i)]
$\operatorname{dist}(S_{\hat{X}},S_X)
\leq(\frac{\delta}{\sigma^2\cdot(\sigma^2/2)})^{1/2}
=\frac{\xi}{\sigma}$.
\end{itemize}
In addition, by Lemma~\ref{lem.SX is bounded}, every $A\in S_{\hat{X}}\cup S_X$ satisfies $\|A\|_V\leq\sqrt{2}/\sigma\leq\alpha$, and so we have $S_{\hat{X}}\cup S_X\subseteq T_\alpha$.
Lemma~\ref{lem.lipschitz bound} then implies 
\begin{itemize}
\item[(ii)]
$f_{\hat{X}},f_X\colon(S_{\hat{X}}\cup S_X,\|\cdot\|_V)\to\mathbb{R}$ are both $8\alpha\beta^2$-Lipschitz.
\end{itemize}
Taking $f(A):=|f_{\hat{X}}(A)-f_X(A)|$, then Lemma~\ref{lem.lipschitz bound} also implies that $f\colon(T_{\alpha+\eta},\|\cdot\|_V)\to\mathbb{R}$ is $16(\alpha+\eta)\beta^2$-Lipschitz.
This together with the second part of $\mathcal{E}_{\delta,\alpha,\eta,\gamma}$ then gives
\begin{itemize}
\item[(iii)]
$|f_{\hat{X}}(A)-f_X(A)|\leq\gamma$ for every $A\in S_X\cup S_Y$.
\end{itemize}
Now that we have (i)--(iii), we may conclude by Lemma~\ref{lem.key lemma} that
\[
\Big|\min_{A\in S_{\hat{X}}}f_{\hat{X}}(A)-\min_{A\in S_{X}}f_X(A)\Big|
\leq \frac{16\xi\beta^2}{\sigma^2}+\gamma
\]
over the event $\mathcal{E}_{\delta,\alpha,\eta,\gamma}$.
At this point, we select $\xi:=2^{-5}\epsilon$ so that $\delta=2^{-11}\epsilon^2\sigma^2$, and we select $\eta:=2^{-8}\epsilon\sigma^{-1}$ and $\gamma:=2^{-1}\epsilon\beta^2\sigma^{-2}$ so that the right-hand size above equals $\epsilon\beta^2\sigma^{-2}$.
Then since $\epsilon\leq 2^5$ and $\beta\geq\sigma$, the union bound together with Lemmas~\ref{lem.bound spectral distance}, \ref{lem.net exists}, and~\ref{lem.pointwise bound} gives
\begin{align*}
&\mathbb{P}[(\mathcal{E}_{\delta,\alpha,\eta,\gamma})^c]\\
&\leq 8(d_1+d_2)\cdot e^{-\frac{n}{2}\cdot (2^{-12}\epsilon^2\sigma^2\beta^{-2})^2}+(1+2^{10}\sqrt{2k}\epsilon^{-1})^{k(d_1+d_2)}\cdot 4(d_1+d_2)\cdot e^{-\frac{n}{2}\cdot (2^{-7}\epsilon)^2}\\
&\leq \operatorname{exp}\Big[\log(2^3(d_1+d_2))-n\cdot 2^{-25}(\epsilon\sigma\beta^{-1})^4\Big]+\operatorname{exp}\Big[k(d_1+d_2)\log(2^{22}k\epsilon^{-2})-n\cdot 2^{-15}\epsilon^2\Big],
\end{align*}
and each term of the final sum is smaller than $p/2$ by our choice of $n$.
\end{proof}

\section{Proof of Theorem~\ref{thm.main result}}
\label{sec.proof of main result 2}

%
%
%


The following lemma will help us prove both parts of the result:

\begin{lemma}
\label{lem.key lemma 2}
Suppose $A_1,A_2,S_1,S_2$ are real matrices such that
\[
A_1S_1
=A_2S_2
\qquad
\text{and}
\qquad
\operatorname{im}A_i^\top
\subseteq\operatorname{im}S_i,
\qquad
i\in\{1,2\}.
\]
Then $\operatorname{ker}A_iS_i=\operatorname{ker}S_1+\operatorname{ker}S_2$ if and only if
\begin{equation}
\label{eq.key lemma 2 condition}
\operatorname{ker}[A_1,-A_2]
=\operatorname{im}[S_1;S_2]+(\operatorname{ker}S_1^\top\oplus\operatorname{ker}S_2^\top).
\end{equation}
\end{lemma}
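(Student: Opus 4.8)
The plan is to recognize that one of the two inclusions in \eqref{eq.key lemma 2 condition} holds automatically, so that the claimed set identity is equivalent to an equality of dimensions, and then to reduce both that dimension equality and the kernel condition to a single scalar identity involving $\operatorname{rank}(A_1S_1)$. Throughout I would write $T:=A_1S_1=A_2S_2$ and note that $\operatorname{ker}A_1S_1=\operatorname{ker}A_2S_2=\operatorname{ker}T$, so the hypothesis ``$\operatorname{ker}A_iS_i=\operatorname{ker}S_1+\operatorname{ker}S_2$'' does not depend on $i$ and simply reads $\operatorname{ker}T=\operatorname{ker}S_1+\operatorname{ker}S_2$.

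First I would show that the right-hand side of \eqref{eq.key lemma 2 condition} is always contained in the left-hand side. The inclusion $\operatorname{im}[S_1;S_2]\subseteq\operatorname{ker}[A_1,-A_2]$ is immediate from $A_1S_1=A_2S_2$, since $[A_1,-A_2][S_1;S_2]=A_1S_1-A_2S_2=0$. For the other summand, taking orthogonal complements in $\operatorname{im}A_i^\top\subseteq\operatorname{im}S_i$ gives $\operatorname{ker}S_i^\top\subseteq\operatorname{ker}A_i$; hence $A_i$ annihilates $\operatorname{ker}S_i^\top$, and so $[A_1,-A_2]$ annihilates $\operatorname{ker}S_1^\top\oplus\operatorname{ker}S_2^\top$. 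Thus the right-hand side sits inside the left-hand side unconditionally, and \eqref{eq.key lemma 2 condition} holds if and only if the two subspaces have equal dimension.

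It then remains to compute both dimensions. For the right-hand side the sum is in fact direct: if $(u,v)=(S_1\omega,S_2\omega)$ also lies in $\operatorname{ker}S_1^\top\oplus\operatorname{ker}S_2^\top$, then $u\in\operatorname{im}S_1\cap\operatorname{ker}S_1^\top=\operatorname{im}S_1\cap(\operatorname{im}S_1)^\perp=\{0\}$ and likewise $v=0$. Using $\dim\operatorname{im}[S_1;S_2]=D-\dim(\operatorname{ker}S_1\cap\operatorname{ker}S_2)$, then $\dim\operatorname{ker}S_i^\top=d_i-\operatorname{rank}S_i$, and finally the modular relation $\dim(\operatorname{ker}S_1\cap\operatorname{ker}S_2)+\dim(\operatorname{ker}S_1+\operatorname{ker}S_2)=\dim\operatorname{ker}S_1+\dim\operatorname{ker}S_2$, this collapses to $\dim(\mathrm{RHS})=(d_1+d_2)-D+\dim(\operatorname{ker}S_1+\operatorname{ker}S_2)$. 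For the left-hand side, rank--nullity gives $\dim\operatorname{ker}[A_1,-A_2]=(d_1+d_2)-\dim(\operatorname{im}A_1+\operatorname{im}A_2)$.

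The crux is the identity $\operatorname{im}A_i=\operatorname{im}(A_iS_i)=\operatorname{im}T$, which is where the hypothesis $\operatorname{im}A_i^\top\subseteq\operatorname{im}S_i$ earns its keep: decomposing $\mathbb{R}^{d_i}=\operatorname{im}S_i\oplus\operatorname{ker}S_i^\top$ and using that $A_i$ kills the second summand yields $\operatorname{im}A_i=A_i(\operatorname{im}S_i)=\operatorname{im}(A_iS_i)$. Since $A_1S_1=A_2S_2=T$, both images equal $\operatorname{im}T$, so $\dim(\operatorname{im}A_1+\operatorname{im}A_2)=\operatorname{rank}T$. Finally, Lemma~\ref{lem.kernel limit} gives $\operatorname{ker}T\supseteq\operatorname{ker}S_1+\operatorname{ker}S_2$, whence $\operatorname{ker}T=\operatorname{ker}S_1+\operatorname{ker}S_2$ is equivalent to $\dim\operatorname{ker}T=\dim(\operatorname{ker}S_1+\operatorname{ker}S_2)$, i.e.\ $\operatorname{rank}T=D-\dim(\operatorname{ker}S_1+\operatorname{ker}S_2)$. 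Comparing this with the two dimension formulas above shows it is exactly the condition $\dim(\mathrm{LHS})=\dim(\mathrm{RHS})$, which closes the equivalence. I expect the main obstacle to be not any single step but keeping the rank/nullity bookkeeping consistent across the two sides; once the free inclusion and the image identity $\operatorname{im}A_i=\operatorname{im}(A_iS_i)$ are in hand, the remainder is pure dimension counting.
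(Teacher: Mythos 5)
Your proof is correct, and it takes a genuinely different route from the paper's. The paper first reduces to the case where each $S_i$ has full row rank (replacing $S_i$ by $V_i^\top S_i$ for an orthonormal basis $V_i$ of $\operatorname{im}S_i$), and then proves the two implications separately: ($\Leftarrow$) by element-chasing (for $x\in\operatorname{ker}T$ it averages $[\pm S_1x;S_2x]\in\operatorname{ker}[A_1,-A_2]$ to get $[0;S_2x]\in\operatorname{im}[S_1;S_2]$), and ($\Rightarrow$) by a chain of rank--nullity estimates hinging on the intermediate claims $\operatorname{ker}A_1=S_1\operatorname{ker}S_2$ and $\operatorname{dim}S_1\operatorname{ker}S_2=\operatorname{dim}\operatorname{ker}S_2-\operatorname{dim}\operatorname{ker}[S_1;S_2]$. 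You instead observe that both relevant inclusions---$\operatorname{ker}[A_1,-A_2]\supseteq\operatorname{im}[S_1;S_2]+(\operatorname{ker}S_1^\top\oplus\operatorname{ker}S_2^\top)$, and $\operatorname{ker}T\supseteq\operatorname{ker}S_1+\operatorname{ker}S_2$ from Lemma~\ref{lem.kernel limit}---hold unconditionally under the hypotheses, so each side of the equivalence is purely a statement about dimensions; you then show that both dimension equalities collapse to the same scalar identity $\operatorname{rank}T=D-\operatorname{dim}(\operatorname{ker}S_1+\operatorname{ker}S_2)$, the crux being $\operatorname{im}A_i=\operatorname{im}(A_iS_i)=\operatorname{im}T$, which is precisely where the hypothesis $\operatorname{im}A_i^\top\subseteq\operatorname{im}S_i$ enters (via $\operatorname{ker}S_i^\top\subseteq\operatorname{ker}A_i$ and the splitting $\mathbb{R}^{d_i}=\operatorname{im}S_i\oplus\operatorname{ker}S_i^\top$). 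Your route buys a shorter, more symmetric argument: there is no case reduction, no element-chasing, and both directions of the equivalence fall out simultaneously from one rank bookkeeping identity. What the paper's proof provides in exchange is finer structural information in the full-row-rank case (notably $\operatorname{ker}A_1=S_1\operatorname{ker}S_2$). It is worth noting that your dimension count of the right-hand side (directness of the sum, computed via rank--nullity and the modular identity) is essentially the same computation the paper later performs inside the proof of Theorem~\ref{thm.main result}(a), so your proof in effect absorbs that application-side computation into the lemma itself.
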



\begin{proof}
Let $d_i$ and $r_i$ denote the number of rows and the rank of $S_i$, respectively.
Let $V_i$ denote a $d_i\times r_i$ matrix whose columns form an orthonormal basis for $\operatorname{im}S_i$.
We first claim that \eqref{eq.key lemma 2 condition} holds if and only if $\operatorname{ker}[A_1V_1,-A_2V_2]=\operatorname{im}[V_1^\top S_1;V_2^\top S_2]$.
To see ($\Rightarrow$), note that
\begin{align*}
\operatorname{ker}[A_1V_1,-A_2V_2]
&=\left[\begin{array}{cc}V_1^\top&0\\0&V_2^\top\end{array}\right]\Big(\operatorname{ker}[A_1,-A_2]\cap(\operatorname{im}S_1\oplus\operatorname{im}S_2)\Big)\\
&=\left[\begin{array}{cc}V_1^\top&0\\0&V_2^\top\end{array}\right]\operatorname{im}[S_1;S_2]\\
&=\operatorname{im}[V_1^\top S_1;V_2^\top S_2].
\end{align*}
For ($\Leftarrow$), observe that since $\operatorname{im}A_i^\top\subseteq\operatorname{im}S_i$, it holds that $A_i=A_iV_iV_i^\top$, and so
\begin{align*}
\operatorname{ker}[A_1,-A_2]
&=\operatorname{ker}[A_1V_1V_1^\top,-A_2V_2V_2^\top]\\
&=\left[\begin{array}{cc}V_1&0\\0&V_2\end{array}\right]\operatorname{ker}[A_1V_1,-A_2V_2]+\Big((\operatorname{im}V_1)^\perp\oplus(\operatorname{im}V_2)^\perp\Big)\\
&=\left[\begin{array}{cc}V_1&0\\0&V_2\end{array}\right]\operatorname{im}[V_1^\top S_1;V_2^\top S_2]+\Big((\operatorname{im}V_1)^\perp\oplus(\operatorname{im}V_2)^\perp\Big)\\
&=\operatorname{im}[S_1;S_2]+(\operatorname{ker}S_1^\top\oplus\operatorname{ker}S_2^\top).
\end{align*}
In addition, $A_iS_i=A_iV_iV_i^\top S_i$.
Overall, if $\operatorname{im}S_i$ is a proper subspace of $\mathbb{R}^{d_i}$, then we may redefine $S_i\leftarrow V_i^\top S_i$ without loss of generality.
As such, from now on, we assume that $A_1S_1=A_2S_2=:T$ and $\operatorname{im}S_i=\mathbb{R}^{d_i}$ for both $i\in\{1,2\}$, and our task is to prove the equivalence
\[
\operatorname{ker}T=\operatorname{ker}S_1+\operatorname{ker}S_2
\quad
\Longleftrightarrow
\quad
\operatorname{ker}[A_1,-A_2]
=\operatorname{im}[S_1;S_2].
\]

($\Leftarrow$)
By Lemma~\ref{lem.kernel limit}, it suffices to show $\operatorname{ker}T\subseteq\operatorname{ker}S_1+\operatorname{ker}S_2$.
Suppose $x\in\operatorname{ker}T$.
Then $A_iS_ix=0$, and so $[\pm S_1x;S_2x]\in\operatorname{ker}[A_1,-A_2]$, which by averaging gives $[0;S_2x]\in\operatorname{ker}[A_1,-A_2]$.
Since $\operatorname{ker}[A_1,-A_2]=\operatorname{im}[S_1;S_2]$ by assumption, there must exist $v$ such that $S_1v=0$ and $S_2v=S_2x$, that is, $x\in v+\operatorname{ker}S_2\subseteq\operatorname{ker}S_1+\operatorname{ker}S_2$, as desired.

($\Rightarrow$)
Since $A_1S_2=A_2S_2$ by assumption, it holds that $\operatorname{ker}[A_1,-A_2]\supseteq\operatorname{im}[S_1;S_2]$.
It therefore suffices to prove $\operatorname{dim}\operatorname{ker}[A_1,-A_2]\leq\operatorname{rank}[S_1;S_2]$.
To do so, we will apply the following intermediate claims:
\begin{itemize}
\item[(i)]
$\operatorname{ker}A_1=S_1\operatorname{ker}S_2$.
\item[(ii)]
$\operatorname{dim}S_1\operatorname{ker}S_2
=\operatorname{dim}\operatorname{ker}S_2-\operatorname{dim}\operatorname{ker}[S_1;S_2]$.
\end{itemize}
First, we verify (i).
For ($\subseteq$), select $x\in\operatorname{ker}A_1$.
Since $S_1$ has full row rank by assumption, there exists $y$ such that $x=S_1y$.
It follows that $y\in\operatorname{ker}T$.
By assumption, we may decompose $y=u_1+u_2$ with $u_i\in\operatorname{ker}S_i$.
Then $x=S_1(u_1+u_2)=S_1u_2\in S_1\operatorname{ker}S_2$.
For ($\supseteq$), select $u_2\in\operatorname{ker}S_2\subseteq\operatorname{ker}T$.
Then $0=Tu_2=A_1S_1u_2$, and so $S_1u_2\in\operatorname{ker}A_1$.
For (ii), select a basis $B_0$ for $\operatorname{ker}[S_1;S_2]=\operatorname{ker}S_1\cap\operatorname{ker}S_2$ and extend to a basis $B_2$ for $\operatorname{ker}S_2$.
Then $\operatorname{span}\{S_1x\}_{x\in B_2}=S_1\operatorname{ker}S_2$.
Since $S_1x=0$ for every $x\in B_0$, we have $\operatorname{span}\{S_1x\}_{x\in B_2\setminus B_0}=S_1\operatorname{ker}S_2$.
By construction, no nontrivial linear combination of $B_2\setminus B_0$ resides in $\operatorname{ker}S_1$, and so $\{S_1x\}_{x\in B_2\setminus B_0}$ is linearly independent.
It follows that $\{S_1x\}_{x\in B_2\setminus B_0}$ is a basis for $S_1\operatorname{ker}S_2$, and the claim follows by counting.

At this point, it is convenient to enunciate dimensions: $A_i\in\mathbb{R}^{k\times d_i}$ and $S_i\in\mathbb{R}^{d_i\times D}$.
In what follows, we obtain the result after multiple applications of the rank--nullity theorem.
First, we apply rank--nullity on $[A_1,-A_2]$ and on $A_1$ to get
\begin{align*}
\operatorname{dim}\operatorname{ker}[A_1,-A_2]
=d_1+d_2-\operatorname{rank}[A_1,-A_2]
\leq d_1+d_2-\operatorname{rank}A_1
=d_2+\operatorname{dim}\operatorname{ker}A_1.
\end{align*}
Next, we apply (i) and (ii) and the fact that $S_2$ has full row rank to get
\begin{align*}
\operatorname{dim}\operatorname{ker}[A_1,-A_2]
\leq d_2+\operatorname{dim}\operatorname{ker}A_1
&=d_2+\operatorname{dim}\operatorname{ker}S_2-\operatorname{dim}\operatorname{ker}[S_1;S_2]\\
&=\operatorname{rank}S_2+\operatorname{dim}\operatorname{ker}S_2-\operatorname{dim}\operatorname{ker}[S_1;S_2].
\end{align*}
Finally, we apply rank--nullity on $S_2$ and on $[S_1;S_2]$ to get
\begin{align*}
\operatorname{dim}\operatorname{ker}[A_1,-A_2]
\leq\operatorname{rank}S_2+\operatorname{dim}\operatorname{ker}S_2-\operatorname{dim}\operatorname{ker}[S_1;S_2]
&=D-\operatorname{dim}\operatorname{ker}[S_1;S_2]\\
&=\operatorname{rank}[S_1;S_2].
\qedhere
\end{align*}
\end{proof}

\begin{lemma}
\label{lem.generic}
Fix any $m\times n$ matrix $A$ of rank $r$.
Then $AX$ also has rank $r$ for a generic $n\times p$ matrix $X$ that satisfies $X1=0$, provided $p\geq r+1$.
\end{lemma}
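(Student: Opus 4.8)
The plan is to show that $\operatorname{rank}(AX)=r$ fails only on a proper algebraic subset of the feasible parameter space, so that it holds generically. First I would record the trivial upper bound: since $\operatorname{rank}(AX)\le\operatorname{rank}(A)=r$ for every $X$, it suffices to prove that $\operatorname{rank}(AX)\ge r$ for generic $X$ subject to $X\mathbf{1}=0$ (here $\mathbf{1}\in\mathbb{R}^p$ is the all-ones vector, so the constraint says the columns of $X$ sum to zero). The natural parameter space is the linear subspace $W:=\{X\in\mathbb{R}^{n\times p}:X\mathbf{1}=0\}$, which is an affine space isomorphic to $\mathbb{R}^{n(p-1)}$ and in particular is irreducible as an algebraic variety; this irreducibility is what makes the word ``generic'' meaningful.

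Next I would set up the algebraic-geometry mechanism. The condition $\operatorname{rank}(AX)<r$ is equivalent to the simultaneous vanishing of all $r\times r$ minors of $AX$, each of which is a polynomial in the entries of $X$ and hence, after restriction, a polynomial on $W$. Thus the ``bad'' locus $\{X\in W:\operatorname{rank}(AX)<r\}$ is Zariski-closed in $W$. Since $W$ is irreducible, this locus is either all of $W$ or a proper closed subset; in the latter case its complement is a dense Zariski-open set of full Lebesgue measure, which is exactly the genericity we want. So the entire lemma reduces to exhibiting a single witness $X^\star\in W$ with $\operatorname{rank}(AX^\star)=r$, certifying that at least one $r\times r$ minor is not identically zero on $W$.

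The heart of the argument, and the only place the hypothesis $p\ge r+1$ enters, is constructing this witness. Because $\operatorname{rank}A=r$, I can choose $v_1,\dots,v_r\in\mathbb{R}^n$ with $Av_1,\dots,Av_r$ linearly independent, i.e.\ a basis of $\operatorname{im}A$. I would then let the columns of $X^\star$ be $v_1,\dots,v_r,\,-(v_1+\cdots+v_r),\,0,\dots,0$, where the trailing zero columns are available precisely because $p\ge r+1$. By construction the columns sum to zero, so $X^\star\in W$, while the first $r$ columns of $AX^\star$ are the independent vectors $Av_1,\dots,Av_r$, forcing $\operatorname{rank}(AX^\star)\ge r$ and therefore equality. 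I expect the main obstacle to be conceptual rather than computational: one must check that the columns-sum-to-zero constraint really costs exactly one dimension. Indeed, any $X\in W$ has linearly dependent columns, so $\operatorname{rank}X\le p-1$ and $\operatorname{rank}(AX)\le\min\{r,p-1\}$; this both explains why $p\ge r+1$ is unavoidable and confirms that the witness above is optimal. Combining the witness with the irreducibility argument then yields the claim.
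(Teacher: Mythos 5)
Your proof is correct and follows essentially the same route as the paper's: both reduce the claim to exhibiting a single witness $X^\star$ satisfying $X^\star\mathbf{1}=0$ and $\operatorname{rank}(AX^\star)=r$, and both build that witness by letting $r$ columns map under $A$ to a basis of its image while using one extra column (available precisely because $p\geq r+1$) to absorb the zero-sum constraint. The only cosmetic difference is that the paper certifies genericity through one fixed $r\times r$ minor $\det(A_SX_r)$, taking standard basis vectors $e_{t_1},\dots,e_{t_r}$ as the witness columns, whereas you consider the vanishing of all $r\times r$ minors and invoke irreducibility of the constraint subspace $W$.
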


\begin{proof}
First, we write $X=[x_{ij}]_{i\in[n],j\in[p]}$.
Since $X1=0$, we observe that $X$ consists of $n(p-1)$ free variables $\{x_{ij}\}_{i\in[n],j\in[p-1]}$ that together determine the final column $x_{ip}=-\sum_{j\in[p-1]}x_{ij}$.
Select size-$r$ index sets $S\subseteq[m]$ and $T\subseteq[n]$ such that the $r\times r$ submatrix $A_{ST}$ of $A$ has rank $r$.
Let $A_S$ denote $r\times n$ submatrix of $A$ whose row indices reside in $S$, and let $X_r$ denote the $n\times r$ submatrix of $X$ whose column indices reside in $[r]$.
Then $p(X):=\det(A_SX_r)$ is a polynomial in $\{x_{ij}\}_{i\in[n],j\in[p-1]}$ that we claim is nonzero.
To see this, write $T=\{t_1,\ldots,t_r\}$ and consider the $n\times p$ matrix $B$ defined by
\[
B_{ij}
=\left\{\begin{array}{rl}1&\text{if }i=t_j\\-1&\text{if }i\in T, j=p\\0&\text{otherwise.}\end{array}\right.
\]
Then $B1=0$ and $A_SB_r=A_{ST}$, meaning $p(B)=\det(A_SB_r)=\det(A_{ST})\neq0$.
This establishes that $p(X)$ is a nonzero polynomial, and so the complement of its zero set is generic.
Over this generic set of $X$'s, since $A_SX_r$ is a submatrix of $AX$, it holds that
\[
r
=\operatorname{rank}A_SX_r
\leq\operatorname{rank}AX
\leq\operatorname{rank}A
=r.
\qedhere
\]
\end{proof}

\begin{proof}[Proof of Theorem~\ref{thm.main result}(a)]
For the requisite function $\mathcal{D}$, we run matching component analysis (MCA, Algorithm~\ref{alg.mca}) with a data-dependent choice for $k$, namely,
\[
k:=\operatorname{dim}(\operatorname{im}Z_1^\top \cap \operatorname{im}Z_2^\top).
\]
Here, $Z_1$ and $Z_2$ are determined in the normalization stage of MCA.
Notice that MCA requires $k\geq1$.
As such, in the degenerate case where $k=0$, we say $\mathcal{D}$ outputs $A_i=0\in\mathbb{R}^{1\times d_i}$ and $b_i=0\in\mathbb{R}$.

We claim that $\mathcal{D}$ witnesses that $\operatorname{ALM}(d_1,d_2,n)$ is exactly matchable.
To see this, fix $D\in\mathbb{N}$, select any continuous distribution $\mathbb{P}$ over $\mathbb{R}^D$, select $S_i\in\mathbb{R}^{d_i\times D}$ and $\mu_i\in\mathbb{R}^{d_i}$ for $i\in\{1,2\}$, and then draw $\{\omega_j\}_{j\in[n]}$ independently with distribution $\mathbb{P}$.
We run MCA on data of the form $x_{ij}:=S_i\omega_j+\mu_i$ for $i\in\{1,2\}$ and $j\in[n]$.
Put $\overline{\omega}:=\frac{1}{n}\sum_{j\in[n]}\omega_j$.
Then $\overline{x}_i=S_i\overline{\omega}+\mu_i$, and so $x_{ij}-\overline{x}_i=S_i(\omega_j-\overline{\omega})$.
Let $F$ denote the $D\times n$ matrix whose $j$th column is $\omega_j-\overline{\omega}$.
Then $Z_i=\Lambda_i^{-1/2}V_i^\top S_iF$.
The choice of $\Lambda_i$ and $V_i$ ensures that the columns of $\frac{1}{\sqrt{n}}Z_i^\top$ are orthonormal.
As such, the singular values of $\frac{1}{n}Z_1Z_2^\top$ are cosines of the principal angles between $\operatorname{im}Z_1^\top$ and $\operatorname{im}Z_2^\top$.
It follows that $\|Z_1Z_2^\top\|_{2\to2}\leq n$, and the multiplicity of the singular value $n$ equals our choice for $k$.

\textbf{Case I:} $k\geq1$.
MCA finds $W_i\in\mathbb{R}^{k\times r_i}$ with orthonormal columns for $i\in\{1,2\}$ such that $nW_1W_2^\top=Z_1Z_2^\top$.
This in turn implies that $nI_k=W_1^\top Z_1Z_2^\top W_2$, and since the columns of $\frac{1}{\sqrt{n}}Z_i^\top W_i$ are orthonormal, it follows that $W_1^\top Z_1=W_2^\top Z_2$.
Since $A_i:=W_i^\top\Lambda_i^{-1/2} V_i^\top$, this then implies
\[
A_1S_1F
=W_1^\top\Lambda_1^{-1/2} V_1^\top S_1F
=W_1^\top Z_1
=W_2^\top Z_2
=W_2^\top\Lambda_2^{-1/2} V_2^\top S_2F
=A_2S_2F.
\]
Equivalently, we have $[A_1,-A_2][S_1;S_2]F=0$.
Next, since $n\geq d_1+d_2+1\geq \operatorname{rank}[S_1;S_2]+1$, Lemma~\ref{lem.generic} implies that the following holds almost surely:
\[
\operatorname{im}[S_1;S_2]
=\operatorname{im}[S_1;S_2]F
\subseteq\operatorname{ker}[A_1,-A_2].
\]
As such, $[A_1,-A_2][S_1;S_2]=0$, that is, $A_1S_1=A_2S_2$.
Considering $b_i=-A_i(S_i\overline{\omega}+\mu_i)$, we further have
\[
A_1(S_1\omega+\mu_1)+b_1
=A_1S_1(\omega-\overline{\omega})
=A_2S_2(\omega-\overline{\omega})
=A_2(S_2\omega+\mu_2)+b_2
\]
for every $\omega\in\mathbb{R}^D$.
This establishes Definition~\ref{eq.alm}(i).
For Definition~\ref{eq.alm}(ii), first note that
\begin{equation}
\label{eq.subspace containment}
\operatorname{im}A_i^\top
=\operatorname{im}V_i\Lambda_i^{-1/2}W_i
\subseteq\operatorname{im}V_i
\subseteq\operatorname{im}S_i
\end{equation}
for both $i\in\{1,2\}$, and so the hypothesis of Lemma~\ref{lem.key lemma 2} is satisfied.
Taking orthogonal complements of \eqref{eq.subspace containment} gives $\operatorname{ker}S_i^\top\subseteq\operatorname{ker}A_i$.
Since $\operatorname{ker}[A_1,-A_2]$ is closed under addition, this then implies
\begin{equation}
\label{eq.subspace containment before dim count}
\operatorname{ker}[A_1,-A_2]
\supseteq\operatorname{im}[S_1;S_2]+(\operatorname{ker}S_1^\top\oplus\operatorname{ker}S_2^\top).
\end{equation}
We count dimensions to demonstrate equality.
For the left-hand side, the rank--nullity theorem gives
\[
\operatorname{dim}\operatorname{ker}[A_1,-A_2]
=d_1+d_2-\operatorname{rank}[A_1,-A_2]
=d_1+d_2-k.
\]
For the right-hand side, notice that $\operatorname{im}[S_1;S_2]$, $\operatorname{ker}S_1^\top\oplus\{0\in\mathbb{R}^{d_2}\}$, and $\{0\in\mathbb{R}^{d_1}\}\oplus\operatorname{ker}S_2^\top$ are pairwise orthogonal, and so
\[
\operatorname{dim}\Big(\operatorname{im}[S_1;S_2]+(\operatorname{ker}S_1^\top\oplus\operatorname{ker}S_2^\top)\Big)
=\operatorname{rank}[S_1;S_2]+\operatorname{dim}\operatorname{ker}S_1^\top+\operatorname{dim}\operatorname{ker}S_2^\top.
\]
Put $r_i:=\operatorname{rank}S_i=\operatorname{rank}S_iF=\operatorname{rank}Z_i$, where the second equality holds almost surely by Lemma~\ref{lem.generic}.
Then
\begin{align*}
\operatorname{rank}[S_1;S_2]
=\operatorname{rank}[S_1;S_2]F
&=\operatorname{rank}\left[\begin{array}{cc}\Lambda_1^{-1/2}V_1^\top&0\\0&\Lambda_2^{-1/2}V_2^\top\end{array}\right]\left[\begin{array}{c}S_1F\\S_2F\end{array}\right]\\
&=\operatorname{rank}[Z_1;Z_2]\\
&=\operatorname{rank}[Z_1^\top,Z_2^\top]
=\operatorname{dim}(\operatorname{im}Z_1^\top+\operatorname{im}Z_2^\top)
=r_1+r_2-k.
\end{align*}
Also, $\operatorname{dim}\operatorname{ker}S_i^\top=d_i-r_i$ for both $i\in\{1,2\}$ by rank--nullity.
Overall, \eqref{eq.key lemma 2 condition} holds, and so we may conclude Definition~\ref{eq.alm}(ii).

\textbf{Case II:} $k=0$.
Definition~\ref{eq.alm}(i) holds since both sides of the equality are zero.
For Definition~\ref{eq.alm}(ii), we again appeal to Lemma~\ref{lem.key lemma 2}.
In this case, \eqref{eq.subspace containment before dim count} is immediate since $\operatorname{ker}[A_1,-A_2]=\mathbb{R}^{d_1+d_2}$, and equality follows from the same dimension count.
\end{proof}

\begin{proof}[Proof of Theorem~\ref{thm.main result}(b)]
Suppose to the contrary that $\operatorname{ALM}(d_1,d_2,n)$ is exactly matchable for some $n<d_1+d_2+1$ with witness $\mathcal{D}$.
We may take $n=d_1+d_2$ without loss of generality.
Put $D=d_1+d_2$ and let $\mathbb{P}_1$ be any continuous distribution that is supported on all of $\mathbb{R}^D$.
Let $\{\omega_j\}_{j\in[D]}$ denote independent random variables with distribution $\mathbb{P}_1$, and let $\mathcal{V}$ denote the distribution of the shortest vector $v(\{\omega_j\}_{j\in[D]})$ in the affine hull of $\{\omega_j\}_{j\in[D]}$.
Notice that $v(\{\omega_j\}_{j\in[D]})\neq0$ almost surely, and for every $v\in\mathbb{R}^D\setminus\{0\}$, $v(\{\omega_j\}_{j\in[D]})=v$ is equivalent to having $\omega_j\in v^\perp+v$ for every $j\in[D]$.
As such, $\{\omega_j\}_{j\in[D]}$ remain independent after conditioning on $v(\{\omega_j\}_{j\in[D]})$.
Let $\mathbb{P}_1|_v$ denote the distribution of $\omega_j$ conditioned on $\omega_j\in v^\perp+v$.
Select any piecewise continuous mapping that sends $v\in\mathbb{R}^D\setminus\{0\}$ to a $D\times (D-1)$ matrix $S^{(v)}$ whose columns form an orthonormal basis for $v^\perp$, and define $\mathbb{P}_2^{(v)}$ to be the continuous distribution on $\mathbb{R}^{D-1}$ such that if $X$ has distribution $\mathbb{P}_2^{(v)}$ then $S^{(v)}X$ has distribution $\mathbb{P}_1|_v$.
Now put $[S_1;S_2]:=I_D$, $[S^{(v)}_1;S^{(v)}_2]:=S^{(v)}$, and $[v_1;v_2]:=v$.
Drawing $V\sim\mathcal{V}$, we therefore have
\begin{equation}
\label{eq.equal dist}
\mathcal{E}_{\mathbb{P}_1}(S_1,0,S_2,0)
\equiv \mathcal{E}_{\mathbb{P}_2^{(V)}}(S^{(V)}_1,V_1,S^{(V)}_2,V_2).
\end{equation}
Here, $\equiv$ denotes equality in distribution.
At this point, we define
\begin{align*}
(A_1,b_1,A_2,b_2)
&:=(\mathcal{D}\circ\mathcal{E}_{\mathbb{P}_1})(S_1,0,S_2,0),\\
(A_1^{(v)},b_1^{(v)},A_2^{(v)},b_2^{(v)})
&:=(\mathcal{D}\circ\mathcal{E}_{\mathbb{P}_2^{(v)}})(S^{(v)}_1,v_1,S^{(v)}_2,v_2),
\qquad v\in\mathbb{R}^D\setminus\{0\}.
\end{align*}
By assumption, we have both
\begin{itemize}
\item[(i)]
$A_1(S_1\omega+0)+b_1=A_2(S_2\omega+0)+b_2$ for all $\omega\in\mathbb{R}^D$, and
\item[(ii)]
$\operatorname{ker}A_iS_i=\operatorname{ker}S_1+\operatorname{ker}S_2$.
\end{itemize}
Setting $\omega=0$ in (i) reveals that $b_1=b_2$, which implies that $A_1S_1\omega=A_2S_2\omega$ for all $\omega\in\mathbb{R}^D$, i.e., $A_1S_1=A_2S_2$.
Also, our choice of $S_i$ ensures that $\operatorname{im}A_i^\top\subseteq\mathbb{R}^{d_i}=\operatorname{im}S_i$ for both $i\in\{1,2\}$, and so the hypothesis of Lemma~\ref{lem.key lemma 2} is satisfied.
As such, (ii) and Lemma~\ref{lem.key lemma 2} together imply that
\[
\operatorname{ker}[A_1,-A_2]
=\operatorname{im}[S_1;S_2]+(\operatorname{ker}S_1^\top\oplus\operatorname{ker}S_2^\top)
=\operatorname{im}[S_1;S_2]
=\operatorname{im}S.
\]
The same argument gives $\operatorname{ker}[A_1^{(v)},-A_2^{(v)}]=\operatorname{im}S^{(v)}$ for generic $v\neq0$.
Now define the function $\mathcal{K}\colon(X_1,y_1,X_2,y_2)\mapsto\operatorname{dim}\operatorname{ker}[X_1,-X_2]$.
Then continuing \eqref{eq.equal dist}, we have
\begin{align*}
D
=\operatorname{rank}S
=\operatorname{dim}\operatorname{ker}[A_1,-A_2]
&=\mathcal{K}(A_1,b_1,A_2,b_2)\\
&\equiv\mathcal{K}(A_1^{(V)},b_1^{(V)},A_2^{(V)},b_2^{(V)})\\
&=\operatorname{dim}\operatorname{ker}[A_1^{(V)},-A_2^{(V)}]
=\operatorname{rank}S^{(V)}
=D-1
\end{align*}
almost surely, a contradiction.
\end{proof}

\section*{Acknowledgments}

CC and DGM were partially supported by the Air Force Summer Faculty Fellowship Program. 
DGM was also supported by AFOSR FA9550-18-1-0107, NSF DMS 1829955 and the Simons Institute of the Theory of Computing.
TS was supported in part by AFOSR LRIR 18RYCOR011.

\end{document}